\newcounter{remark}
\numberwithin{remark}{section}
\newcounter{example}
\numberwithin{example}{section}
\def\inv{^{-1}}%
\newcommand{\eq}[1]{\begin{equation}\label{#1}}
\newcommand{\en}{\end{equation}}
\title{A power Schur complement low-rank correction preconditioner for
general sparse linear systems\thanks{This work was supported
by NSF under grant NSF/DMS 1912048, Shuimu Scholar of Tsinghua University and
by the Minnesota Supercomputing Institute.}}
\author{
Qingqing Zheng\thanks{Department of Mathematical Sciences,
Tsinghua University, 100084 Beijing, China.
{\tt \{zheng1990@mail.tsinghua.edu.cn\}}}
\and
Yuanzhe Xi
\thanks{Department of Mathematics,
        Emory University.
{\tt \{yxi26@emory.edu\}}}
\and
Yousef Saad\thanks{Computer Science \& Engineering,
        University of Minnesota, Twin Cities, US.
{\tt \{saad@umn.edu\}}}
}
\begin{document}
\maketitle

\begin{abstract}
  A parallel preconditioner is proposed for general large sparse
  linear systems that combines a power series expansion method with
  low-rank correction techniques. To enhance convergence, a power
  series expansion is added to a basic Schur complement iterative
  scheme by exploiting a standard matrix splitting of the Schur complement.
  One of the goals of the power series approach is to improve the
  eigenvalue separation of the preconditioner thus allowing an
  effective application of a low-rank correction
  technique. Experiments indicate that this combination can be quite
  robust when solving highly indefinite linear systems.  The
  preconditioner exploits a domain-decomposition approach and its
  construction starts with the use of a graph partitioner to reorder
  the original coefficient matrix.  In this framework, unknowns
  corresponding to interface variables are obtained by solving a
  linear system whose coefficient matrix is the Schur
  complement. Unknowns associated with the interior variables are
  obtained by solving a block diagonal linear system where parallelism
  can be easily exploited.  Numerical examples are provided to
  illustrate the effectiveness of the proposed preconditioner, with an emphasis on
  highlighting its robustness properties in the indefinite case.
\end{abstract}

\begin{keywords}
Low-rank correction, Schur complement, power series expansion,
domain decomposition, parallel preconditioner, Krylov subspace method
\end{keywords}

\begin{AMS}
65F10
\end{AMS}

\section{Introduction}
Consider the solution of the following linear system
\begin{equation}\label{equ:original}
	Az=b,
\end{equation}
where $A\in \mathbb{R}^{n\times n}$ is a large sparse matrix and
$b\in \mathbb{R}^{n}$ is a given vector. Preconditioned Krylov
subspace methods are often used for solving such systems, see,
e.g., \cite{Saad-book2}.
Among the most popular general-purpose preconditioners are the
Incomplete LU (ILU) techniques \cite{m12,m13}. However, ILU  often
fails, especially in situations when the matrix is highly
indefinite \cite{doi:10.1137/18M1228128,doi:10.1137/16M1078409}. In addition, due to their sequential nature, ILU
preconditioners will result in poor performance on massively parallel
high-performance computers.  Algebraic multigrid (AMG) methods
constitute another class of popular techniques for solving problems
arising from some discretized elliptic PDEs. Often, AMG also fails
for indefinite problems. Finally,
sparse approximate inverse preconditioners
\cite{m2,m4,m3,jia1} were developed to overcome these
shortcomings but were later abandoned by practitioners due to their high
memory demand.

Recently, a new class of approximate inverse preconditioners based on
low-rank approximations has been proposed. They include the Multilevel
Low-Rank (MLR) preconditioner \cite{m5}, the Schur complement low-rank
(SLR) preconditioner \cite{m7}, the Multilevel Schur complement
Low-Rank (MSLR) preconditioner \cite{m8} and the Generalized
Multilevel Schur complement Low-Rank (GMSLR) preconditioner \cite{m9}.
These preconditioners approximate the Schur complement or its inverse
by exploiting various low-rank corrections and because they are
essentially approximate inverse methods they tend to perform rather well on
indefinite linear systems. Similar ideas have also been exploited in \cite{doi:10.1137/16M1109503}. A related class of methods is the class of rank
structured matrix methods, which include the HOLDR-matrix
\cite{holdor}, the $\mathcal{H}$-matrix \cite{bebendorfbook,Borne06},
the $\mathcal{H}^2$-matrix \cite{hackhss2002} and hierarchically
semiseparable (HSS) matrices \cite{smash,doi:10.1137/15M1023774,Xi-Xia}. These methods
partition the coefficient matrix $A$ into several smaller blocks and
approximate certain off-diagonal blocks by low-rank matrices. These
techniques have recently been applied to precondition sparse linear
systems, resulting in some rank structured sparse preconditioners.  We
refer the reader to \cite{supermfs,supermf,selected-inversion,doi:10.1137/17M1124152} for
details.

In this paper, we present a method that combines low-rank approximation
methods with a simple  Neumann polynomial expansion
technique \cite[Section 12.3.1]{Saad-book2} aimed at improving robustness.
We call the resulting method the
\emph{Power -- Schur complement Low-Rank} (PSLR)
preconditioner. A straightforward way to
apply the Neumann polynomial preconditioning technique
to the Schur complement $S$ is to approximate $(\omega S)^{-1}$ by an $m$-term polynomial expansion as \cite[Section 12.3.1]{Saad-book2}
\begin{equation}\label{Napp}
\frac{1}{\omega}\big[I+N+N^{2}+\cdots+N^{m}\big]D^{-1},
\end{equation}
where $\omega$ is a scaling parameter, $D$ is the (block) diagonal of
$S$ and $N=I-\omega D^{-1}S$. However, scheme \eqref{Napp} has a
number of disadvantages.  For example it is difficult to choose an
optimal value for the parameter $\omega$.  In addition, since the matrix
series in (\ref{Napp}) converges only when $\rho(N)<1$, the
approximation accuracy will improve as $m$ increases only under this
condition which may not be satisfied for a general matrix.  Moreover,
even if $\rho(N)<1$, \eqref{Napp} is only a rough approximation to
$S^{-1}$ when $m$ is small and using a large $m$ may become
computationally expensive. The PSLR preconditioner seamlessly combines
the power series expansion with a few low-rank correction techniques
and can overcome these shortcomings.  We summarize below the main
advantages of the PSLR preconditioner over existing low-rank
approximate inverse preconditioners.

\begin{enumerate}
\item \textbf{Improved robustness}. When $\rho(N)> 1$, the classical
  Neumann series defined by \eqref{Napp} diverges and the
  approximation accuracy deteriorates as $m$ increases. However,
  low-rank correction techniques can be invoked to address this
  issue. More specifically, we exploit low-rank correction techniques
  as a form of deflation to move those eigenvalues of $N$ with modulus
  larger than $1$ closer to $0$. The goal is to make the series
  \eqref{Napp} converge for the ``deflated" Schur complement.

\item \textbf{Enhanced decay property}.  The performance of each of the
  three previously developed methods, SLR,
  MSLR and GMSLR,  depends on the eigenvalue decay
  property associated with the Schur complement inverse $S^{-1}$. If
  the decay rate is slow, these preconditioners are not effective.  On
  the other hand, PSLR preconditioner can control the eigenvalue decay
  rate of the matrix to be approximated  by adjusting  the number of the
  expansion term $m$ in \eqref{Napp}
  and this can  significantly improve performance.

\item \textbf{High parallelism}.  The low-rank correction terms used
  in the PSLR preconditioner can be computed by solving several linear
  systems with coefficient matrices that are block diagonal.  This
  results in a much more efficient treatment than with in MSLR and
  GMSLR preconditioners since ILU factorizations and the resulting
  triangular solves can be applied efficiently in parallel.  In
  addition, most of the important  matrix-vector products of  PSLR
  involve  block diagonal matrices or  dense
  matrices, leading to  a high degree of parallelism in both
  the construction and the application stage.
\item \textbf{Suitability for general matrices}.
  PSLR {is quite effective in handling general sparse problems.}
  Unlike  SLR and MSLR, it is not restricted to
 symmetric systems. Numerical experiments in Section \ref{sec:num}
 illustrate that the PSLR preconditioner outperforms
 the other low-rank approximation based preconditioners on various tests.
\end{enumerate}

The paper is organized as follows.  Section \ref{sec:order} is a
brief review of graph partitioning, which will be used to reorder the
original coefficient matrix $A$.  Section \ref{sec:pre} shows how to
build the PSLR preconditioner by exploiting low-rank approximations
and a power series expansion associated with the inverse of a certain
Schur complement $S$.  A spectral analysis {for the corresponding
  preconditioned matrix} is also developed.  Section \ref{sec:num}
reports on numerical experiments to illustrate the efficiency and
robustness of the PSLR preconditioner. Concluding remarks are
stated  in Section \ref{sec:con}.

\section{Background:
graph partitioning}\label{sec:order}
Building the PSLR preconditioner begins with a reordering
of the coefficient matrix $A$ with the help of a
graph partitioner \cite{m7,Saad-book2}. Specifically, in this
paper, we invoke any vertex-based (aka `edge separation') partitioner
to reorder $A$. As there is no ambiguity,
we will still use $A$ and $b$ to denote the reordered
matrix and right-hand side, respectively.

Let $s$ be the number of subdomains used in the partitioning. When
the variables are labeled by subdomains and the interface variables
are labeled last, the permuted linear system of \eqref{equ:original}
can be rewritten as
\begin{equation}\label{equ:resystem}
Az=\begin{pmatrix}
B  &E\\
F&C\\
\end{pmatrix}\begin{pmatrix}
x\\
y\\
\end{pmatrix}=\begin{pmatrix}
f\\
g\\
\end{pmatrix},
\end{equation}
where $B\in\mathbb{R}^{p\times p},E\in\mathbb{R}^{p\times q}$
and $F\in\mathbb{R}^{q\times p}$ with $p+q=n$.
The submatrices $B, E, C$ have the following block diagonal structures
\[
  \renewcommand{\arraystretch}{1.5}
  B=\begin{pmatrix}
B_{1}& & & \\
 &B_{2}& & \\
 & &\ddots& \\
 & & &B_{s}\\
\end{pmatrix}, \
\setlength{\arraycolsep}{-2pt}
E=\begin{pmatrix}
E_{1}& & & \\
&E_{2}& & \\
& &\vdots& \\
& &      & \ \ E_{s}\\
\end{pmatrix},\
\setlength{\arraycolsep}{3pt}
C=\begin{pmatrix}
C_{1}&C_{12}&\cdots&C_{1s}\\
C_{21}&C_{2}&\cdots&C_{2s}\\
\vdots&\vdots&\ddots&\vdots\\
C_{s1}&C_{s2}&\cdots&C_{s}\\
\end{pmatrix} ,
\]
while $F$ has the same block structure as that of  $E^T$.

For each subdomain $i$, $B_{i}$ denotes the matrix corresponding to
the interior variables and $C_{i}$ represents the matrix associated
with local interface variables, the matrices $E_i$ and $F_i$ denote
the couplings to local interface variables and the couplings from
local interface variables, respectively. A matrix $C_{ij}$ is a
nonzero matrix if and only if some interface variables of subdomain
$i$ are coupled with some interface variables of subdomain $j$.

After it is  reordered, the solution to
Equation (\ref{equ:resystem}) can  be found by solving
two intermediate problems
\begin{equation}\label{equ:solve}
  \left\{\begin{array}{lll}
  Sy=g-FB^{-1}f,  \\[2mm]
  Bx=f-Ey,
\end{array}\right.
\end{equation}
where $S=C-FB^{-1}E$ is the
\emph{Schur complement} of the coefficient matrix in (\ref{equ:resystem}).

Since $B$ and $E$ are block diagonal, the second equation in
(\ref{equ:solve}) can be solved efficiently once the vector $y$
becomes available. Many efforts have been devoted to develop
preconditioners for solving linear systems associated with $S$ in the
first equation.  Algebraic Recursive Multilevel Solvers (ARMS) is a
class Multilelvel ILU-type preconditioners
\cite{Saad-book2,Saad-Suchomel-ARMS} that consist of dropping small
entries of $S$ before applying an ILU factorization to it.  The SLR
preconditioner \cite{m7} developed more recently approximates $S^{-1}$
by the sum of $C^{-1}$ and a low-rank correction term. Here the
low-rank correction term is computed by exploiting the eigenvalue
decay property of $S^{-1}-C^{-1}$.  A relative to SLR is the
Multilevel Schur Low-Rank (MSLR) preconditioner \cite{m8} which
approximates $S^{-1}$ by applying the same idea as in SLR recursively
in order to address the scalability issue. Finally GMSLR \cite{m9} was
developed as a generalization of MSLR to nonsymmetric systems.

\section{The PSLR preconditioner}\label{sec:pre}
In this section, we first derive a power series expansion of $S^{-1}$,
and then discuss  low-rank correction techniques whose goal is to
improve its approximation accuracy.

\subsection{Power series expansion of the inverse of the
  Schur complement}\label{sec:expansion}
The proposed power series expansion is applied to a splitting form of
$S$ rather than $S$ itself.
Specifically, we first write the Schur complement $S$
as the difference of two matrices:
\begin{eqnarray}\label{matrixS}
 S = C_{0}-E_{s},
\end{eqnarray}
where
$$C_{0}=\text{diag}\begin{pmatrix}
                          C_{1}, & C_{2}, & \ldots, & C_{s} \\
                        \end{pmatrix}
$$
is the block diagonal part of $C$ and $E_s=C_0-S$.
Note that $E_s = (C_0-C) + F B\inv E $.
Then we have
\begin{equation}\label{invS0}
S^{-1}= (I-C_{0}^{-1}E_{s})^{-1}C_{0}^{-1}.
\end{equation}
Next, we simply apply a $(m+1)$-term power series expansion of
$(I-C^{-1}_{0}E_s)^{-1}$ to obtain  the following approximation to
$S^{-1}$
\begin{equation}
S^{-1} \approx \sum_{i=0}^{m}(C_{0}^{-1}E_{s})^{i}C_{0}^{-1}.
\label{eq:approx1}
\end{equation}
One immediate advantage of using \eqref{eq:approx1} is that the
application of $\sum_{i=0}^{m}(C_{0}^{-1}E_{s})^{i}C_{0}^{-1}$ on a
vector only involves linear system solutions associated with $C_0$
and $B$, as well as
matrix vector multiplications associated with $E$ and $F$.
The block diagonal structures in these three matrices make these operations
extremely efficient.

Using results with standard norms it is straightforward to
prove the following  proposition which  analyzes
the approximation accuracy of \eqref{eq:approx1}.
\begin{proposition}\label{t1}
  If the spectral radius of $C_{0}^{-1}E_{s}$ satisfies
  $\rho(C_{0}^{-1}E_{s})<1$,
then
\begin{equation}\label{Sexp}
S^{-1} =\sum_{i=0}^{m}(C_{0}^{-1}E_{s})^{i}C_{0}^{-1}+ R,
\end{equation}
where the error matrix
\begin{equation}\label{equR}
R=\sum_{i=m+1}^{\infty}(C_{0}^{-1}E_{s})^{i}C_{0}^{-1}
\end{equation}
satisfies
\begin{equation}\label{normR}
\Vert R \Vert\leq\frac{\Vert C_{0}^{-1}E_{s} \Vert^{m+1}\Vert C_{0}^{-1}\Vert}{1-
\Vert C_{0}^{-1}E_{s} \Vert}.
\end{equation}
\end{proposition}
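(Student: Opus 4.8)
The plan is to convert the splitting \eqref{matrixS} into a convergent Neumann series and then simply truncate it. First I would note that $C_0$ is nonsingular (its diagonal blocks $C_i$ are assumed invertible) and that $I-C_0^{-1}E_s = C_0^{-1}S$ is nonsingular because $S$ is, so that \eqref{invS0} is legitimate. Then I would invoke the standard fact that $\rho(M)<1$ implies $\sum_{i\ge 0} M^i$ converges to $(I-M)^{-1}$ (see, e.g., \cite[Section 12.3]{Saad-book2} or any treatment of Neumann series), applied to $M=C_0^{-1}E_s$, to obtain
\[
S^{-1} = \Big(\sum_{i=0}^{\infty}(C_0^{-1}E_s)^i\Big)C_0^{-1}.
\]
Splitting this series into the partial sum $\sum_{i=0}^{m}$ and the tail $\sum_{i=m+1}^{\infty}$, and calling the latter $R$, gives \eqref{Sexp} and \eqref{equR} at once.

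For the bound \eqref{normR}, I would work with an operator norm in which $\gamma := \|C_0^{-1}E_s\| < 1$. Using submultiplicativity on each term of the tail,
\[
\|(C_0^{-1}E_s)^i C_0^{-1}\| \le \gamma^{i}\,\|C_0^{-1}\| ,
\]
and then applying the triangle inequality termwise together with the geometric sum $\sum_{i=m+1}^{\infty}\gamma^i = \gamma^{m+1}/(1-\gamma)$ yields $\|R\| \le \gamma^{m+1}\|C_0^{-1}\|/(1-\gamma)$, which is exactly \eqref{normR}. This part is entirely routine.

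The one genuine subtlety — and the step I would flag — is the mismatch between the hypothesis, stated with the spectral radius $\rho(C_0^{-1}E_s)$, and the conclusion \eqref{normR}, which is written with a norm $\|C_0^{-1}E_s\|$. Since $\rho(M)\le\|M\|$ for every submultiplicative norm, the assumption $\rho(C_0^{-1}E_s)<1$ does not by itself force $\|C_0^{-1}E_s\|<1$ in a fixed, prescribed norm, so the geometric sum above need not converge in that norm. The clean resolution is to read \eqref{normR} as holding for any norm in which $\|C_0^{-1}E_s\|<1$ — such a submultiplicative norm always exists precisely because $\rho(C_0^{-1}E_s)<1$ — while observing that the weaker spectral-radius hypothesis is already enough for the qualitative claim that $R\to 0$ as $m\to\infty$. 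With that understanding in place, the proof is the short chain of estimates described above.
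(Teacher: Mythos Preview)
Your proof is correct and matches the paper's approach: the paper does not actually supply a proof, merely remarking that the proposition follows ``using results with standard norms,'' which is precisely the Neumann-series-plus-geometric-tail argument you wrote out. Your flag about the mismatch between the spectral-radius hypothesis and the norm in \eqref{normR} is well taken---the bound as stated really does require $\|C_0^{-1}E_s\|<1$ in the chosen norm, and your reading (that one selects a norm compatible with the spectral radius) is the natural way to make the statement rigorous.
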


A large class of matrices satisfy the condition
$\rho(C_{0}^{-1}E_{s})<1$ as required in Proposition \ref{t1}. For
example, we can show that $\rho(C_{0}^{-1}E_{s})<1$ holds whenever $A$
is symmetric positive definite (SPD) and its (2,2)-block $C$ is
diagonally dominant in the next lemma.
\begin{lemma}\label{l2}
If $A$ in (\ref{equ:resystem}) is SPD, then
\begin{equation}\label{rho}
\lambda(C_{0}^{-1}E_{s})<1.
\end{equation}
Moreover, if the (2,2)-block $C$ of $A$ is diagonally dominant, then
\begin{equation}\label{rho0}
\lambda(C_{0}^{-1}E_{s})>-1.
\end{equation}
Here $\lambda(\cdot)$ denotes any eigenvalue of a matrix.
\end{lemma}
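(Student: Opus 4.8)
The plan is to reduce both inequalities to spectral bounds for the symmetrized matrix $\widehat S:=C_{0}^{-1/2}SC_{0}^{-1/2}$, and then to obtain those bounds from the positive definiteness inherited from $A$ together with, for the lower bound, the diagonal dominance of $C$. First I would record the structural consequences of $A$ being SPD: $B$ and $C$ are principal submatrices of an SPD matrix and hence SPD; each diagonal block $C_{i}$ of $C$ is SPD, so $C_{0}=\operatorname{diag}(C_{1},\dots,C_{s})\succ0$; by symmetry of $A$ we have $F=E^{T}$; and the Schur complement $S=C-E^{T}B^{-1}E$ is SPD. Since $E_{s}=C_{0}-S$ is symmetric, $C_{0}^{-1}E_{s}$ is similar to the symmetric matrix $C_{0}^{-1/2}E_{s}C_{0}^{-1/2}=I-\widehat S$, so its eigenvalues are real and equal $1-\mu$ where $\mu$ ranges over the eigenvalues of $\widehat S$, all of which are positive because $\widehat S\succ0$. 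Bound \eqref{rho} is then immediate from $\mu>0$, and it remains to prove \eqref{rho0}, i.e.\ $\mu<2$, equivalently $2C_{0}-S\succ0$.

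For that, using $E^{T}B^{-1}E\succeq0$ (since $B\succ0$), I would write $2C_{0}-S=(2C_{0}-C)+E^{T}B^{-1}E$ and reduce to showing $2C_{0}-C\succeq0$. Writing $C=C_{0}+C_{\mathrm{off}}$ with $C_{\mathrm{off}}$ the inter-block part of $C$, we have $2C_{0}-C=C_{0}-C_{\mathrm{off}}$: this is symmetric, its diagonal equals that of $C$, and each off-diagonal entry equals $\pm C_{k\ell}$, so its absolute off-diagonal row sums are unchanged. Thus the diagonal dominance of $C$ passes verbatim to $2C_{0}-C$, which, being symmetric and diagonally dominant with positive diagonal, is positive semidefinite by Gershgorin's theorem. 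Hence $2C_{0}-S\succeq E^{T}B^{-1}E\succeq0$, which already yields $\mu\le2$ and therefore $\lambda(C_{0}^{-1}E_{s})\ge-1$.

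The delicate point — and where I expect the genuine work to lie — is upgrading this to the strict inequality in \eqref{rho0}. The quickest route is to note that $E$ usually has full column rank (each subdomain has more interior than interface unknowns), whence $E^{T}B^{-1}E\succ0$ and $2C_{0}-S\succ0$ immediately. More carefully, and without extra hypotheses, one checks that $2C_{0}-C$ is nonsingular: it has the same graph as $C$, and since $C$ is SPD rather than merely diagonally dominant, every irreducible component of that graph must contain a strictly dominant row, a property inherited by $2C_{0}-C$; an irreducibly diagonally dominant symmetric matrix with positive diagonal is SPD, so $2C_{0}-C\succ0$ and hence $2C_{0}-S\succ0$. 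In all cases the proof rests only on Schur-complement positivity inherited from $A$, the invariance of diagonal dominance under the sign changes in passing from $C$ to $2C_{0}-C$, and the elementary positivity of diagonally dominant matrices.
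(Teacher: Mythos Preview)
Your argument is essentially the paper's: reduce to spectral bounds for $C_0^{-1}S$ via similarity to $C_0^{-1/2}SC_0^{-1/2}$, get positivity of its spectrum from $S\succ0$, and for the lower bound write $2C_0-S=(C_0-C_g)+E^TB^{-1}E$ and observe that $C_0-C_g$ inherits diagonal dominance from $C$. The paper stops there and simply asserts that this makes $\Phi$ symmetric positive \emph{definite}, concluding the strict inequality; you are more careful in isolating strictness as the nontrivial step.

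That said, your second route to strictness is flawed. The claim ``$C$ SPD and diagonally dominant implies each irreducible component contains a strictly dominant row'' is false: take
$C=\bigl(\begin{smallmatrix}2&1&1\\1&2&1\\1&1&2\end{smallmatrix}\bigr)$,
which is irreducible, SPD (eigenvalues $4,1,1$), and diagonally dominant with equality in every row. With the singleton partition $C_0=2I$ one gets
$2C_0-C=\bigl(\begin{smallmatrix}2&-1&-1\\-1&2&-1\\-1&-1&2\end{smallmatrix}\bigr)$,
which is only positive \emph{semi}definite; if in addition $E=0$ (so $A=\operatorname{diag}(B,C)$ is still SPD) then $S=C$ and $\lambda(C_0^{-1}E_s)=-1$ exactly. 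Thus the strict bound \eqref{rho0} genuinely requires an extra hypothesis. Your first route---$E$ of full column rank, which is the typical situation in the domain-decomposition setting the paper targets---is a valid sufficient condition; the paper's own proof does not address this point at all.
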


\begin{proof}
  Since $A$ is SPD, $S$ and $C_0$ are also SPD and
  $C_{0}^{-\frac{1}{2}}SC_{0}^{-\frac{1}{2}}$ is
  SPD. Thus the eigenvalues of $C_0\inv S$, which is similar to
  $C_{0}^{-\frac{1}{2}}SC_{0}^{-\frac{1}{2}}$, are all real and
  positive. Moreover,
\begin{equation}\label{CE}
C_{0}^{-1}E_{s}=C_{0}^{-1}(C_{0}-S)=I-C_{0}^{-1}S,
\end{equation}
and this shows that
$
\lambda(C_{0}^{-1}E_{s}) <1 $.

Now we prove the second part of this lemma. Let
\[ C_g = C - C_0 ,
  \]
  which is the matrix $C$ stripped off its diagonal blocks, and note that
  $C_0 - C_g = 2 C_0 - C$.
  Then we have
\begin{eqnarray*}
  2I-C_{0}^{-1}S &=&C_{0}^{-1}(2C_{0}-S) \\
&=&C_{0}^{-1}(C_{0}-C_{g}+E^{T}B^{-1}E),
\end{eqnarray*}
which is similar to
$$\Phi=C_{0}^{-\frac{1}{2}}(C_{0}-C_{g}+E^{T}B^{-1}E)C_{0}^{-\frac{1}{2}}.$$
Since $C$ is a diagonally dominant matrix, the matrix $C_{0}-C_{g}$
is also diagonally dominant. This results in the symmetric positive
definiteness of $\Phi$. Hence, the eigenvalues of $2I-C_{0}^{-1}S$ are
all positive, leading to
\begin{eqnarray}\label{CS}
\lambda(C_{0}^{-1}S)<2.
\end{eqnarray}
This along with (\ref{CE}) yields the desired result:
$\lambda(C_{0}^{-1}E_{s})=1-\lambda(C_{0}^{-1}S)>-1$.
\end{proof}

\medskip Lemma \ref{l2} shows that $\rho(C_{0}^{-1}E_{s})<1$, when $A$
is SPD and $C$ is diagonally dominant.  As an example, we depict the
eigenvalues of $C_{0}^{-1}E_{s}$ in Figure \ref{fig:cos1} for a $3D$
discretized Laplacian matrix $A$ on a $20^3$ grid and the number $s$
of subdomains is set to $s=5$. It is easy to see that the absolute
values of all the eigenvalues of $C_{0}^{-1}E_{s}$ are smaller than
$1$. 


\begin{figure}[ptbh]
	\begin{tabular}
		[r]{c}%
		\includegraphics[height=1.250in]{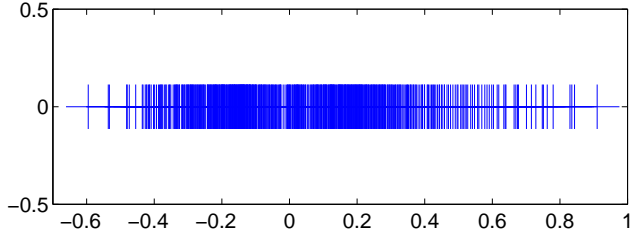}
	\end{tabular}
	\caption{Eigenvalues (\textcolor{blue}{'+'}) of $C_{0}^{-1}E_{s}$ for a  3D Laplacian matrix discretized on a $20^3$ grid with the zero Dirichlet boundary condition where the number of subdomains $s =5$.}%
	\label{fig:cos1}%
\end{figure}
\subsection{Low-rank approximations of $S^{-1}$}\label{low-rank}

The power series expansion of $S^{-1}$ in Section \ref{sec:expansion}  only provides a rough approximation to $S^{-1}$, especially when $m$ is small or/and $\rho(C_{0}^{-1}E_{s})$ is slightly smaller than $1$. In this section, we will consider some low-rank
correction techniques to improve the accuracy of this approximation. In addition, we will also consider the case when $\rho(C_{0}^{-1}E_{s})>1$.

{First define
\begin{equation}\label{equR1}
\widehat{R}=S^{-1}-\sum_{i=0}^{m}(C_{0}^{-1}E_{s})^{i}C_{0}^{-1}.
\end{equation}
Notice that when $\rho(C_{0}^{-1}E_{s})<1$, $\widehat{R}$ is equal to
the matrix $R$ defined in (\ref{equR}).

Then we have
\begin{equation}\label{Sexp1}
S^{-1} =\sum_{i=0}^{m}(C_{0}^{-1}E_{s})^{i}C_{0}^{-1}+ \widehat{R}.
\end{equation}}

Let
\begin{equation}\label{err1}
E_{rr}(m):=S\widehat{R} \in \mathbb{R}^{q\times q}.
\end{equation}
Based on (\ref{Sexp1}) we get
\begin{eqnarray}\label{err0}
I&= & S\sum_{i=0}^{m}(C_{0}^{-1}E_{s})^{i}C_{0}^{-1} +S\widehat{R}\\ \nonumber
&=& S\sum_{i=0}^{m}(C_{0}^{-1}E_{s})^{i}C_{0}^{-1} +E_{rr}(m),
\end{eqnarray}
which leads to
\begin{equation}\label{s1}
 S^{-1}= \bigg[\sum_{i=0}^{m}(C_{0}^{-1}E_{s})^{i}C_{0}^{-1}\bigg](I-E_{rr}(m))^{-1}.
\end{equation}
Here, we assume $I-E_{rr}(m)$ is nonsingular.

Equation \eqref{s1} provides another way to approximate $S^{-1}$. If a $r_k$-step Arnoldi procedure is performed on $E_{rr}(m)$, $E_{rr}(m)$ can be approximated by
\begin{equation}\label{schur}
E_{rr}(m)\approx V_{r_k}H_{r_k}V_{r_k}^{T},
\end{equation}
where $V_{r_k}\in \mathbb{R}^{q\times r_k}$ has orthonormal columns and $H_{r_k}
=V_{r_k}^TE_{rr}(m)V_{r_k}\in \mathbb{R}^{r_k\times r_k}$
is an upper Hessenberg matrix whose eigenvalues can be used to
approximate the largest eigenvalues of $E_{rr}(m)$.
For a given $m$, it can be justified that the Frobenius norm
$\|E_{rr}(m)-V_{r_k}H_{r_k}V_{r_k}^{T}\|_F$ decreases monotonically
as $r_k$ increases \cite{Saad-book2}. As a result, $V_{r_k}H_{r_k}V_{r_k}^{T}$
approximates $E_{rr}(m)$ more accurately as $r_k$ increases.

Combining (\ref{s1}) with (\ref{schur}) gives rise to
\begin{eqnarray}\label{Sinv0}
S^{-1}&\approx & \bigg[\sum_{i=0}^{m}(C_{0}^{-1}E_{s})^{i}C_{0}^{-1}\bigg]
(I-V_{r_k}H_{r_k}V_{r_k}^{T})^{-1}\\  \nonumber
&=& \bigg[\sum_{i=0}^{m}(C_{0}^{-1}E_{s})^{i}C_{0}^{-1}\bigg](I + V_{r_k}[(I-H_{r_k})^{-1}-I]V_{r_k}^{T}) \\ \nonumber
&=& \bigg[\sum_{i=0}^{m}(C_{0}^{-1}E_{s})^{i}C_{0}^{-1}\bigg](I + V_{r_k}G_{r_k}V_{r_k}^{T}),
\end{eqnarray}
where
$G_{r_k}=(I-H_{r_k})^{-1}-I\in \mathbb{R}^{r_k\times r_k}$. In the above process, we utilize the Sherman-Morrison-Woodbury
formula to derive the expression of $(I-V_{r_k}H_{r_k}V_{r_k}^{T})^{-1}$.

Thus, the final approximation to $S\inv$ takes the form:
\begin{equation}\label{Sinv}
  S_{\text{app}}^{-1}=\bigg[\sum_{i=0}^{m}(C_{0}^{-1}E_{s})^{i}C_{0}^{-1}\bigg](I + V_{r_k}G_{r_k}V_{r_k}^{T}) .
\end{equation}

\subsubsection{Approximation accuracy analysis}\label{approximation}
In this section, we quantify the approximation accuracy of $S_{\text{app}}^{-1}$ in terms of $m$ and $r_k$.
The next theorem first shows the relation between the eigenvalue decay rate of $E_{rr}(m)$ and the number of the power series expansion $m+1$.
\begin{theorem}\label{t2}
For any matrix $A$, the matrix $E_{rr}(m)$ in (\ref{err1}) can be rewritten as
\begin{equation}\label{err}
E_{rr}(m)=(E_{s}C_{0}^{{-1}})^{m+1}.
\end{equation}
\end{theorem}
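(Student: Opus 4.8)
The plan is to show that \eqref{err} is a purely algebraic consequence of the definitions and requires no spectral hypothesis on $C_{0}^{-1}E_{s}$, which is exactly why it holds for \emph{any} $A$: the matrix $\widehat{R}$ in \eqref{equR1} is defined as a literal difference, not as the tail of a convergent series, so every manipulation below is an exact matrix identity valid as soon as $B$ and $C_{0}$ (hence $S$) are invertible, which is already assumed.

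First I would insert the definition \eqref{equR1} of $\widehat{R}$ into \eqref{err1}:
\[
E_{rr}(m)=S\widehat{R}=SS^{-1}-S\sum_{i=0}^{m}(C_{0}^{-1}E_{s})^{i}C_{0}^{-1}
      =I-S\sum_{i=0}^{m}(C_{0}^{-1}E_{s})^{i}C_{0}^{-1}.
\]
Next, using the splitting $S=C_{0}-E_{s}=C_{0}\bigl(I-C_{0}^{-1}E_{s}\bigr)$ from \eqref{matrixS} and writing $N:=C_{0}^{-1}E_{s}$ for brevity, I would apply the finite telescoping identity $(I-N)\sum_{i=0}^{m}N^{i}=I-N^{m+1}$, which holds for every square matrix $N$. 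This gives
\[
S\sum_{i=0}^{m}N^{i}C_{0}^{-1}=C_{0}(I-N)\sum_{i=0}^{m}N^{i}C_{0}^{-1}=C_{0}\bigl(I-N^{m+1}\bigr)C_{0}^{-1},
\]
and therefore
\[
E_{rr}(m)=I-C_{0}C_{0}^{-1}+C_{0}N^{m+1}C_{0}^{-1}=C_{0}\,(C_{0}^{-1}E_{s})^{m+1}\,C_{0}^{-1}.
\]

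Finally I would unfold the product: the leading factor $C_{0}$ cancels the first $C_{0}^{-1}$ inside $(C_{0}^{-1}E_{s})^{m+1}$, leaving $E_{s}\,(C_{0}^{-1}E_{s})^{m}\,C_{0}^{-1}$, and regrouping the $m+1$ copies of $E_{s}$ with the $m+1$ copies of $C_{0}^{-1}$ yields $(E_{s}C_{0}^{-1})^{m+1}$, which is precisely \eqref{err}. There is no genuine obstacle in this argument; the only point that deserves to be stated explicitly is that, unlike Proposition \ref{t1}, no convergence assumption is invoked, so the identity is unconditional.
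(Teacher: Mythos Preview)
Your proof is correct and follows essentially the same route as the paper: both start from $E_{rr}(m)=I-S\sum_{i=0}^{m}(C_{0}^{-1}E_{s})^{i}C_{0}^{-1}$, substitute the splitting $S=C_{0}-E_{s}$, and collapse a telescoping sum. The only cosmetic difference is that the paper separates the sum into $-\sum_{i=0}^{m}(E_{s}C_{0}^{-1})^{i}+\sum_{i=1}^{m+1}(E_{s}C_{0}^{-1})^{i}$ directly, whereas you first obtain $C_{0}(C_{0}^{-1}E_{s})^{m+1}C_{0}^{-1}$ and then conjugate; both are the same one-line algebra.
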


\begin{proof}
Combining (\ref{matrixS}) with (\ref{err0}), we have
\begin{eqnarray*}
E_{rr}(m)&=&I-S\bigg[\sum_{i=0}^{m}(C_{0}^{-1}E_{s})^{i}\bigg]C_{0}^{-1}\\
&=& I-(C_{0}-E_{s})\bigg[\sum_{i=0}^{m}(C_{0}^{-1}E_{s})^{i}\bigg]C_{0}^{-1}\\
&=& I-C_{0}\bigg[\sum_{i=0}^{m}(C_{0}^{-1}E_{s})^{i}\bigg]C_{0}^{-1}+E_{s}
\bigg[\sum_{i=0}^{m}(C_{0}^{-1}E_{s})^{i}\bigg]C_{0}^{-1}\\
&=&I-\sum_{i=0}^{m}(E_{s}C_{0}^{-1})^{i}+\sum_{i=1}^{m+1}(E_{s}C_{0}^{-1})^{i}\\
&=&(E_{s}C_{0}^{-1})^{m+1} .
\end{eqnarray*}
This completes the proof.
\end{proof}

Theorem \ref{t2} shows that the eigenvalues
of $E_{rr}(m)$ decay faster as $m$ increases. In fact, the eigenvalue decay rate of  $E_{rr}(m)$ is $m+1$ times faster than that of  $E_{rr}(0)$. Figure \ref{fig:decay0} and Figure \ref{fig:pde900} further justify Theorem \ref{t2} numerically on one symmetric 3D discretized Laplacian matrix (Figure \ref{fig:decay0}) and one non-symmetric \texttt{pde900} matrix from the SuiteSparse collection~\cite{Davis} (Figure \ref{fig:pde900}). {The spectral radius of $E_{s}C_{0}^{-1}$ are equal to $0.9537$ and $0.8117$, respectively, in these two examples}. As can be seen from Figure \ref{fig:decay0} and Figure \ref{fig:pde900}, the eigenvalues of $E_{rr}(m)$ get more clustered around the origin when a larger $m$ is used. Here, small values of $m$,
i.e., $m=0,1,2,3$, are tested.

\begin{figure}[ptbh]
\centering\tabcolsep=0mm
	\begin{tabular}
		[r]{cc}%
		\includegraphics[height=0.75in]{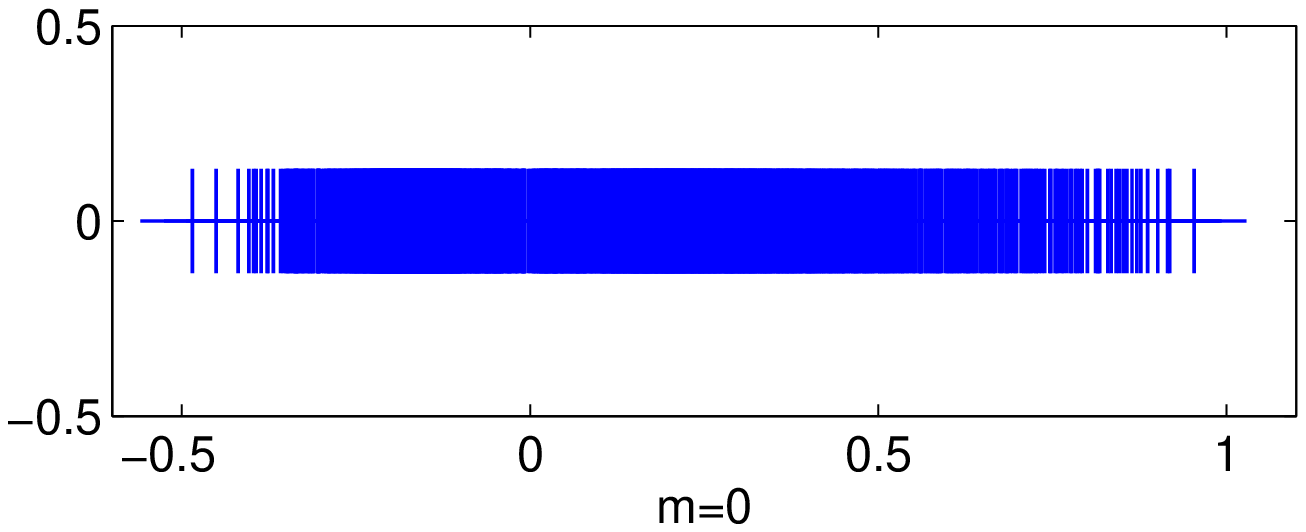}&
		\includegraphics[height=0.75in]{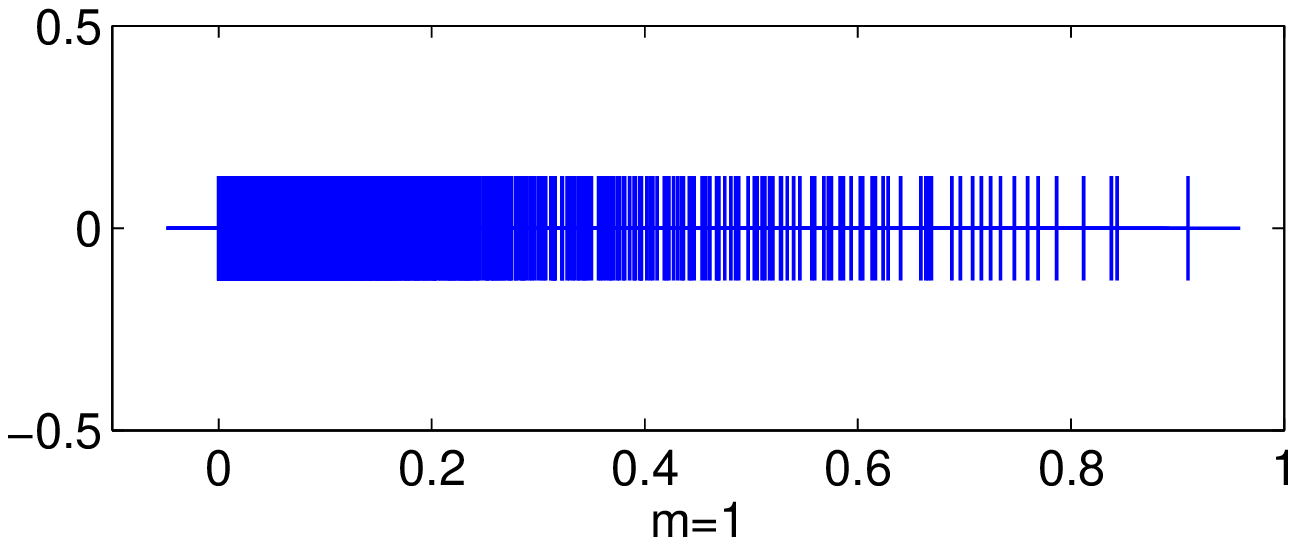}\\
		\includegraphics[height=0.75in]{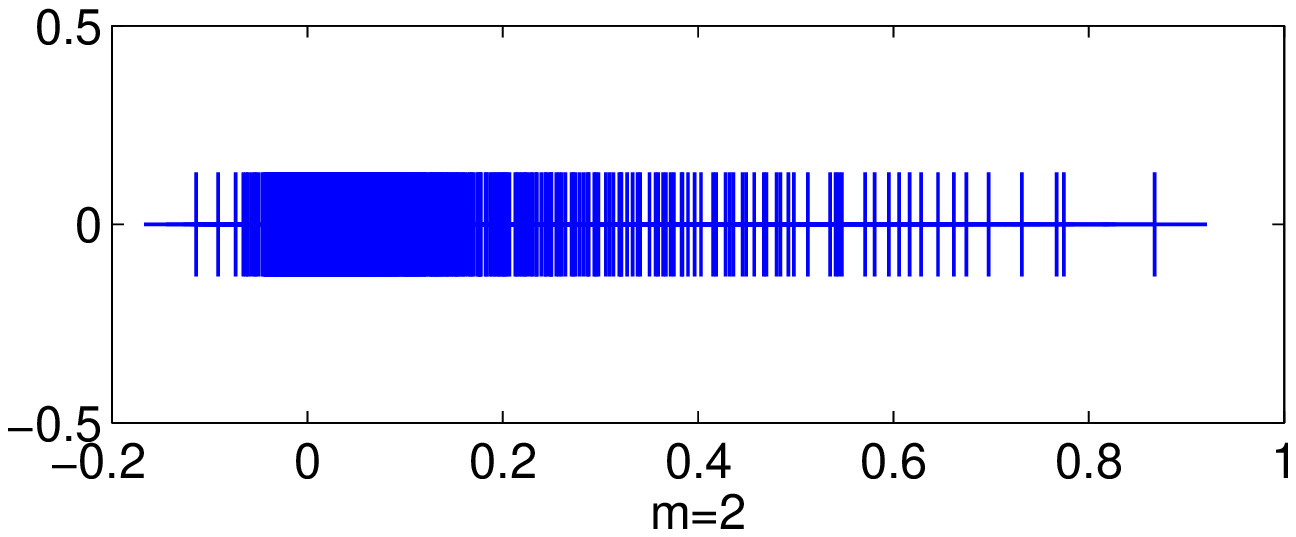}&
		\includegraphics[height=0.75in]{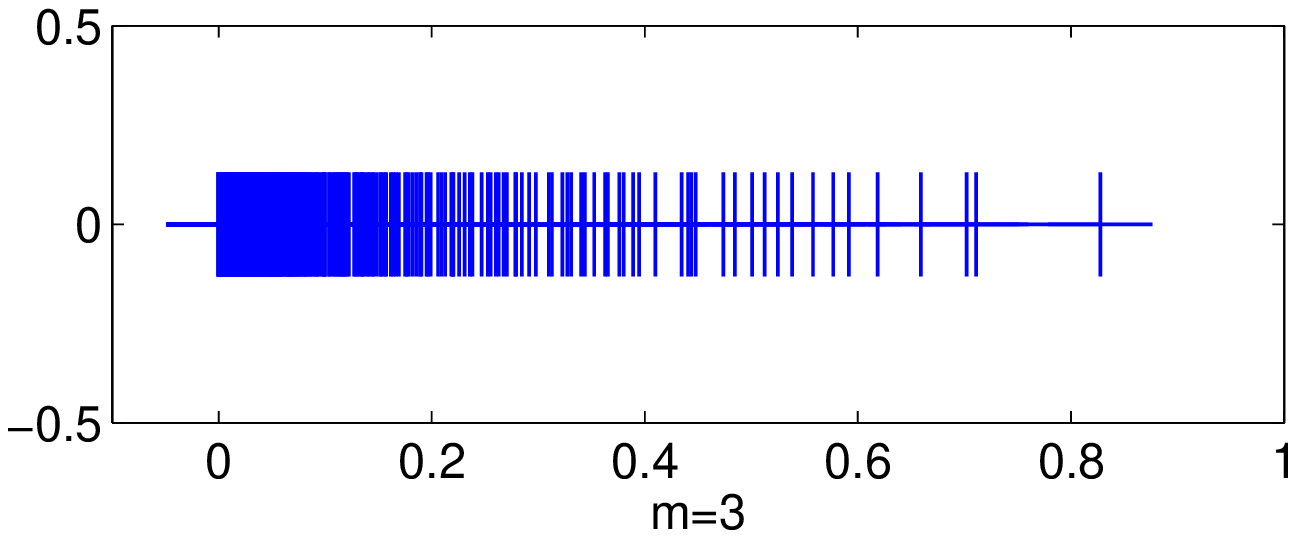}
	\end{tabular}
	\caption{Spectrum of $E_{rr}(m)$ with different values of $m$,
          where the matrix $A$ is taken as a $3D$ Laplacian matrix
          discretized on a $20^3$ grid with the zero Dirichlet
          boundary condition. In this test, the number of subdomains
          is chosen as $s=5$ and
          {$\rho(E_{s}C_{0}^{-1})=0.9537$. Here, a blue {`$+$'} denotes
            an eigenvalue of $E_{s}C_{0}^{-1}$. }}%
	\label{fig:decay0}%
\end{figure}
\begin{figure}[ptbh]
\centering\tabcolsep=0mm
	\begin{tabular}
		[r]{cc}%
		\includegraphics[height=1.5in]{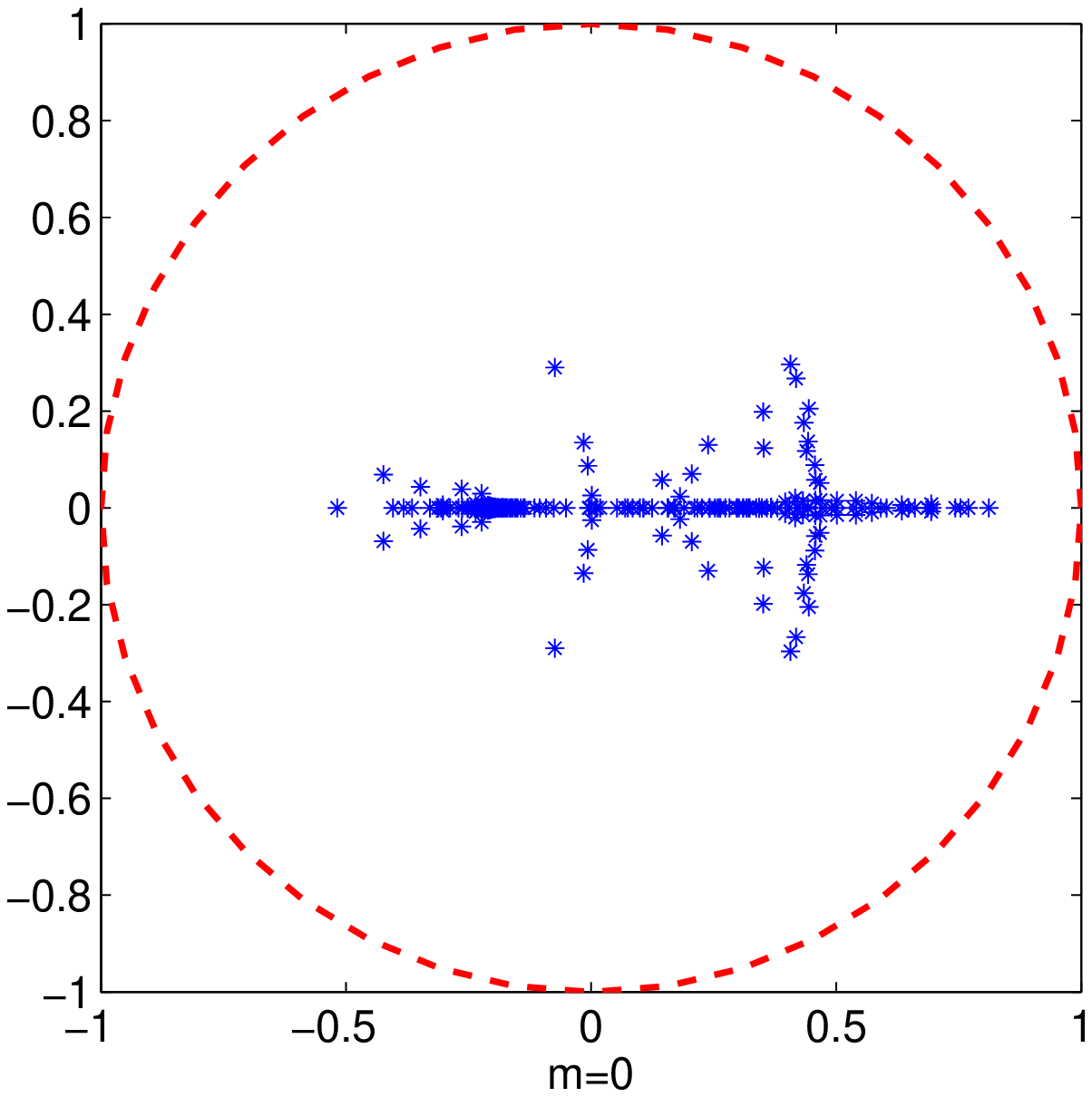}&
		\includegraphics[height=1.5in]{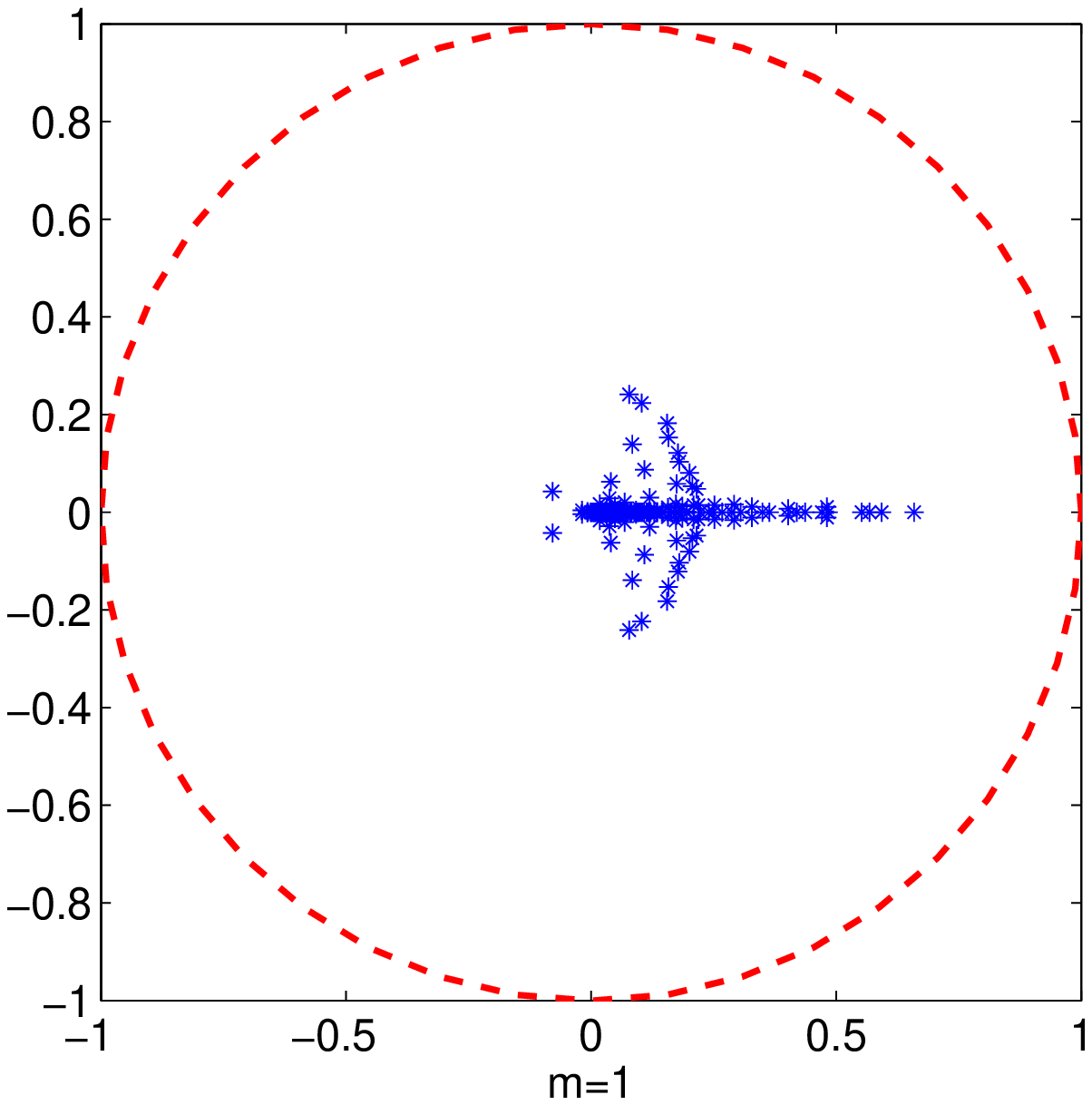}\\
		\includegraphics[height=1.5in]{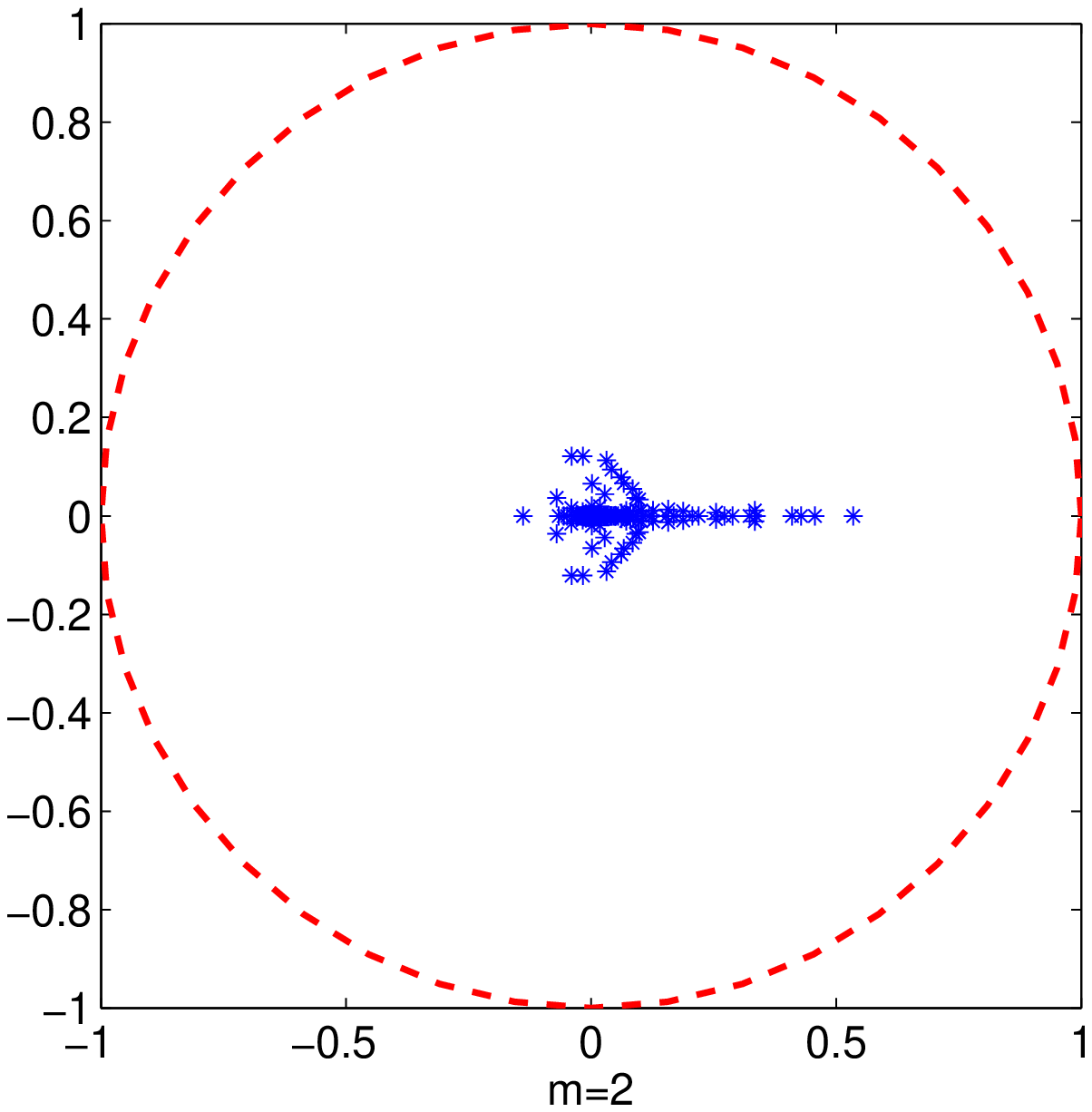}&
		\includegraphics[height=1.5in]{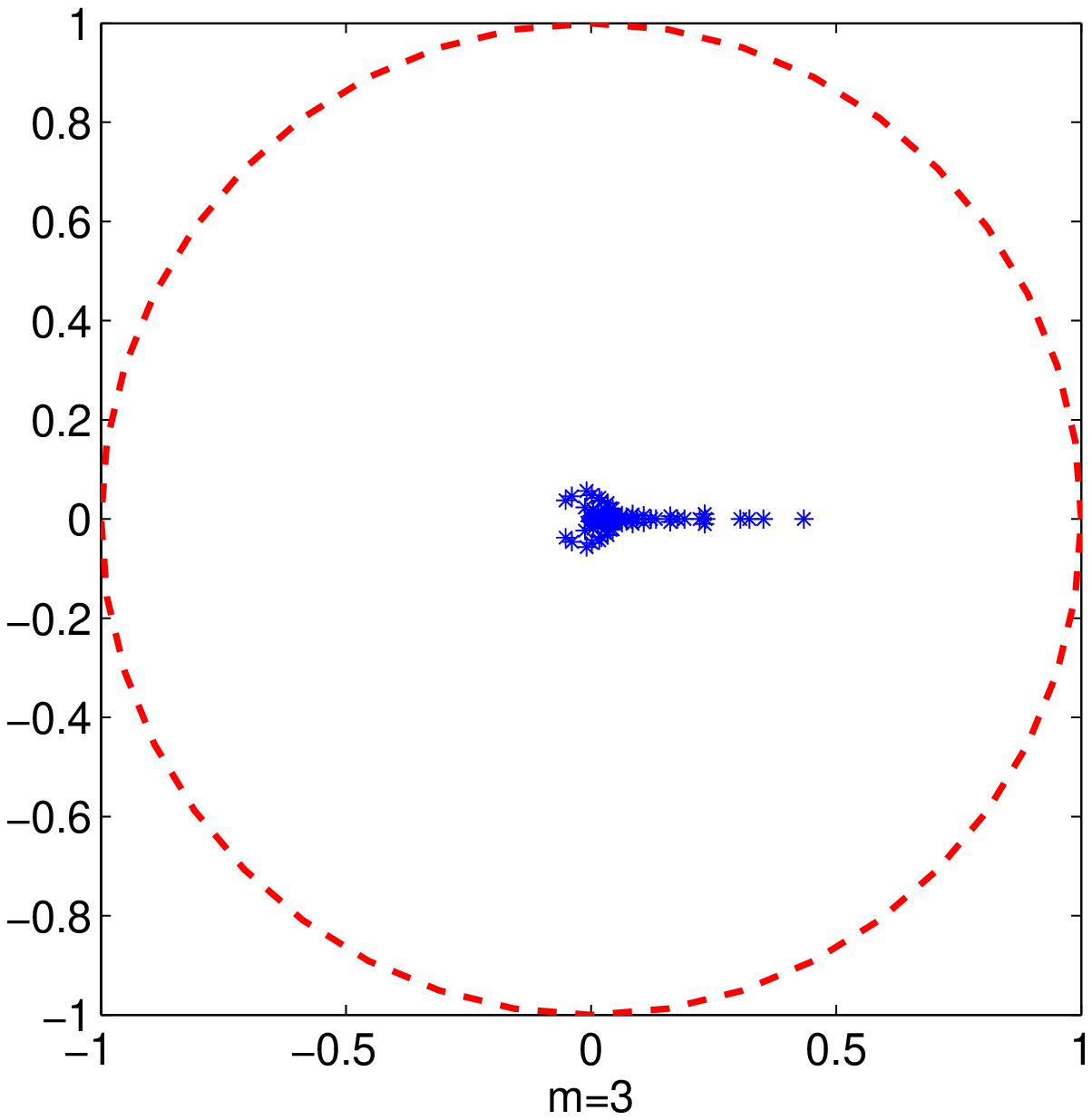}
	\end{tabular}
	\caption{{Spectrum of $E_{rr}(m)$ with different $m$, where
            the matrix $A$ is the non-symmetric
            \texttt{pde900} taken from the SuiteSparse
            collection~\cite{Davis}.  The test matrix has the
            dimension of $900\times 900$ and is indefinite. The number
            of subdomains used in the partition is $s=5$ and
            $\rho(E_{s}C_{0}^{-1})=0.8117$. Here, a blue {`$\ast$'}
            denotes an eigenvalue of $E_{s}C_{0}^{-1}$ and the red
            dashed circle has radius $1$.}}%
	\label{fig:pde900}%
\end{figure}


We then consider two indefinite matrices. The first one is the $3D$
shifted discretized Laplacian matrix (Figure \ref{fig:decay1}) and the
second one is the non-symmetric \texttt{young1c} matrix from the
SuiteSparse collection \cite{Davis} (Figure \ref{fig:pde2961}). The
indefiniteness causes the spectral radius of $E_{s}C_{0}^{-1}$ greater
than $1$ in both tests.  But as can be seen from Figures
\ref{fig:decay1}-\ref{fig:pde2961}, only a few eigenvalues have
modulus greater than $1$. As a result, the majority of the eigenvalues
still get clustered around the origin as $m$ increases. Based on this
property, we can show that the approximation accuracy of \eqref{Sinv}
can be improved as $m$ increases under mild conditions. In contrast,
the classical Neumann series expansion \eqref{eq:approx1} will diverge
in this case.

\begin{figure}[ptbh]
\centering\tabcolsep=0mm
	\begin{tabular}
		[r]{cc}%
		\includegraphics[height=0.75in]{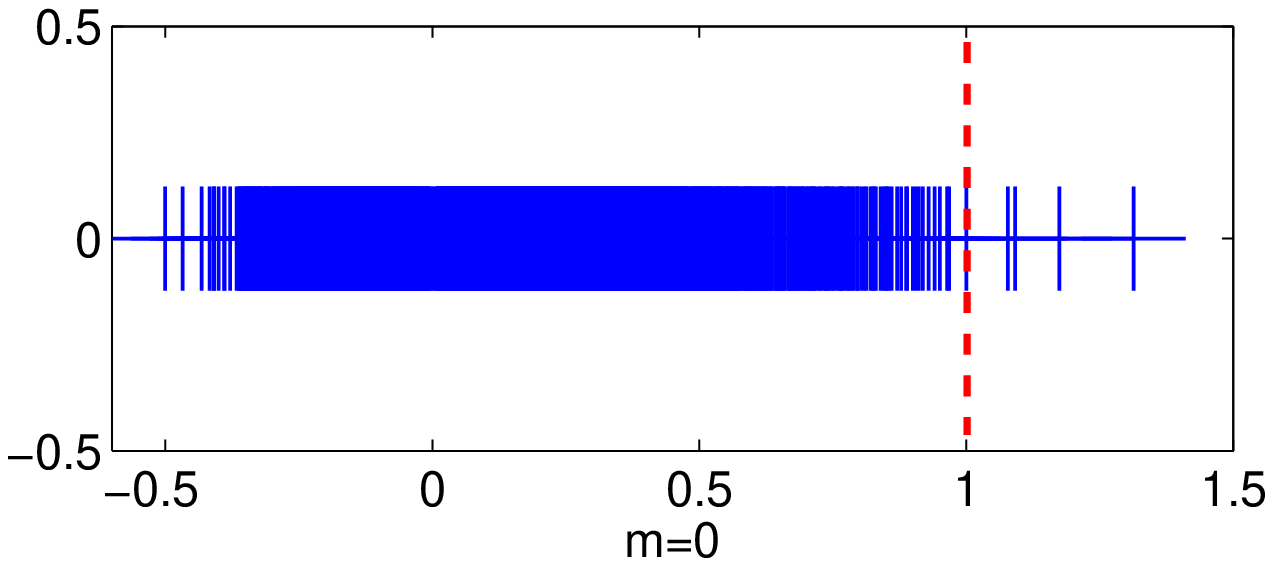}&
		\includegraphics[height=0.75in]{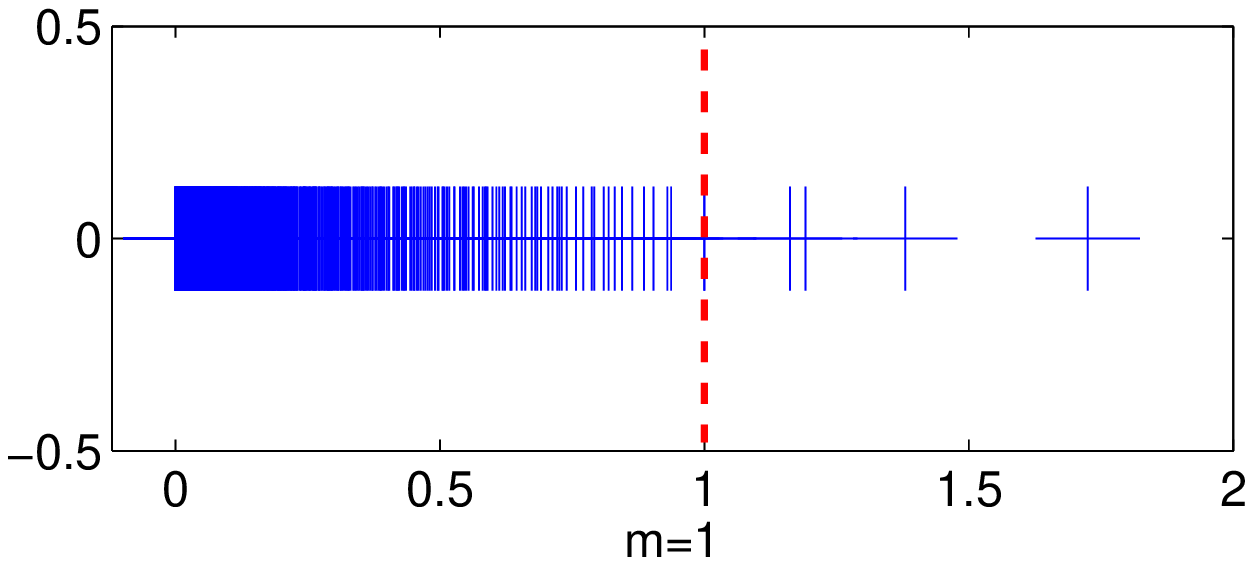}\\
		\includegraphics[height=0.75in]{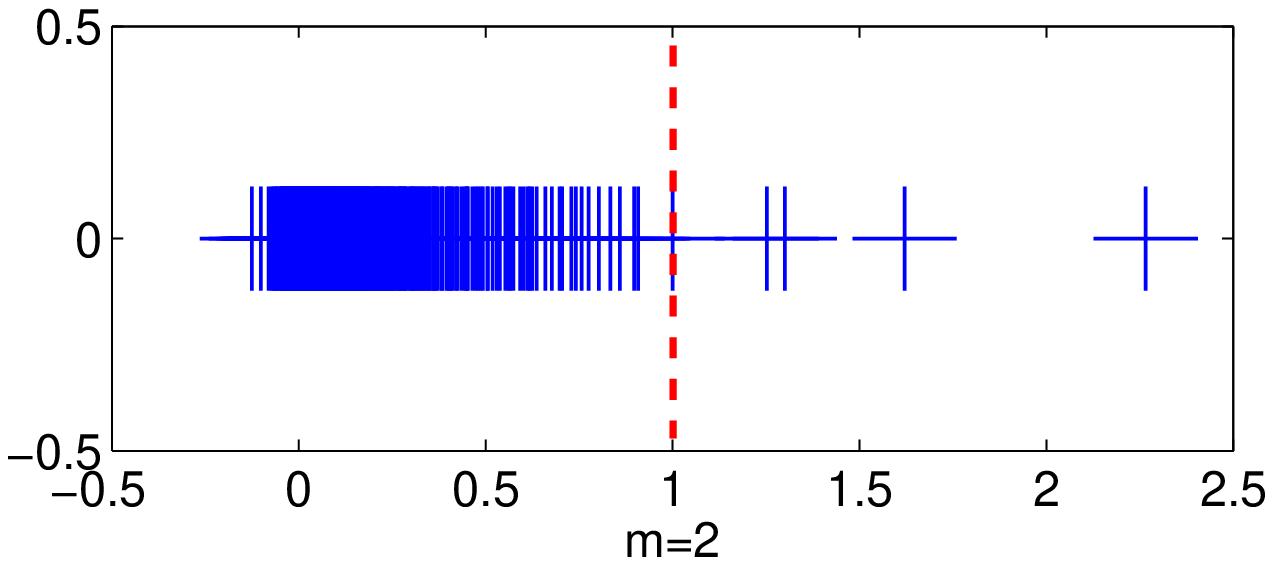}&
		\includegraphics[height=0.75in]{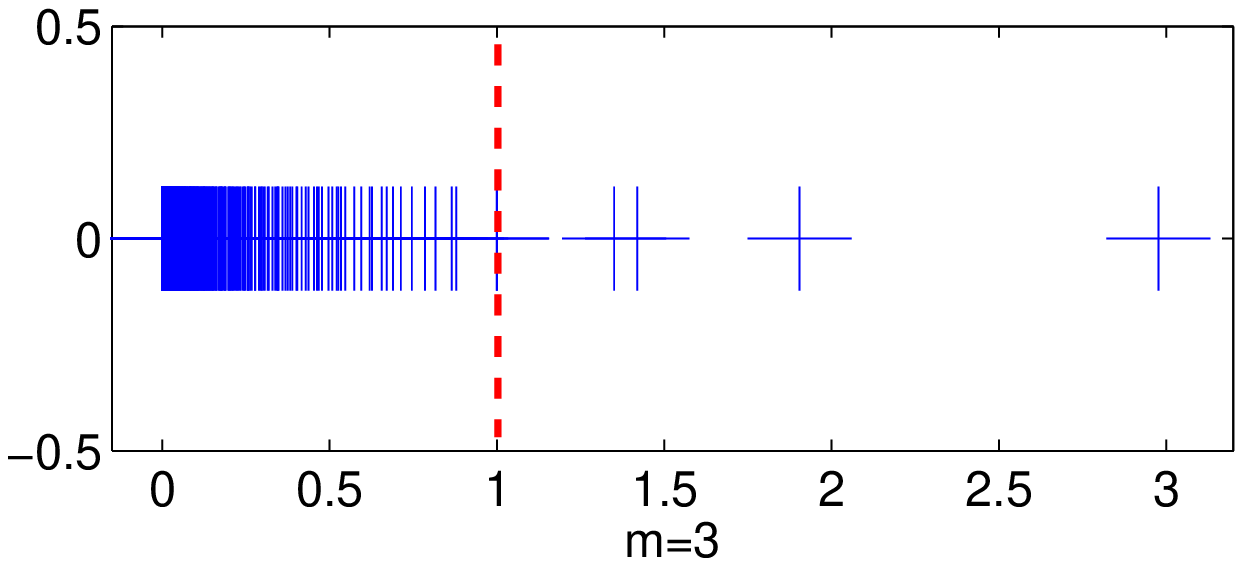}
	\end{tabular}
	\caption{Spectrum of $E_{rr}(m)$ with different $m$, where the
          matrix $A$ is taken as a shifted $3D$ Laplacian matrix
          discretized on a $20^3$ grid with the zero Dirichlet
          boundary condition and the number of subdomains is chosen as
          $s=5$. In this test, $A$ is indefinite and has $7$ negative
          eigenvalues and $E_{s}C_{0}^{-1}$ has $4$ eigenvalues larger
          than $1$. Here, a blue {`+'} denotes an eigenvalue of
          $E_{s}C_{0}^{-1}$. }%
	\label{fig:decay1}%
\end{figure}
\begin{figure}[ptbh]
\centering\tabcolsep=0mm
	\begin{tabular}
		[r]{cc}%
		\includegraphics[height=1.5in]{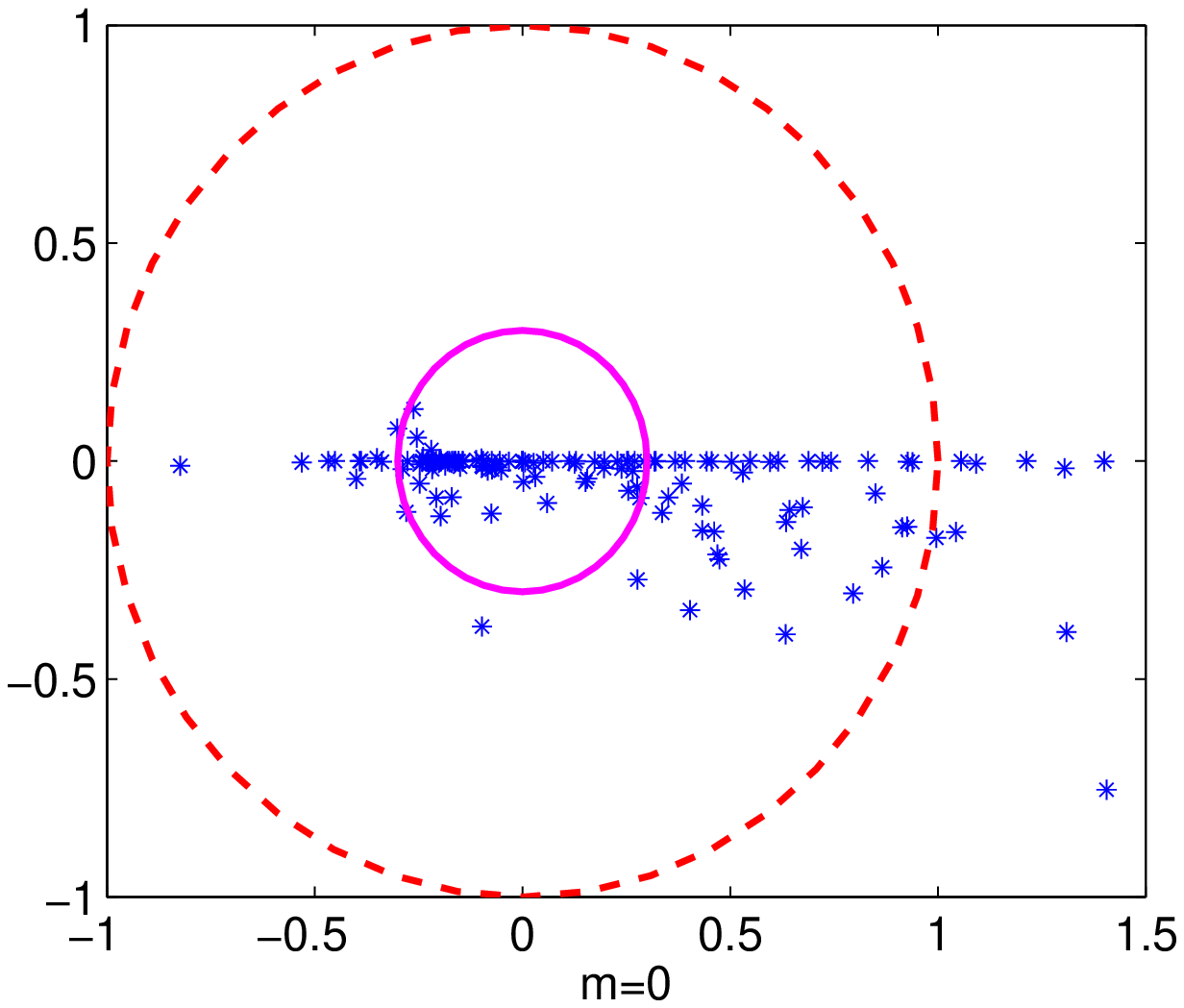}&
		\includegraphics[height=1.5in]{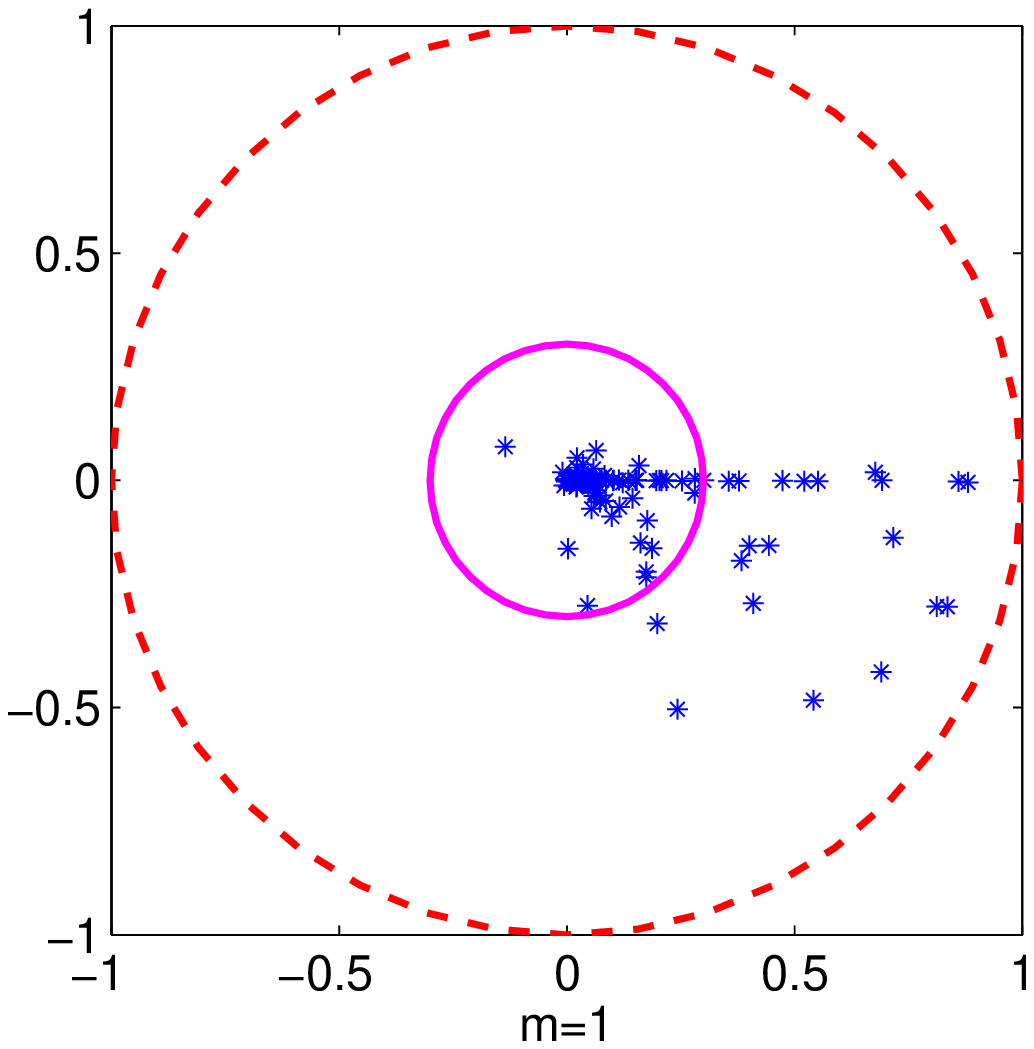}\\
		\includegraphics[height=1.5in]{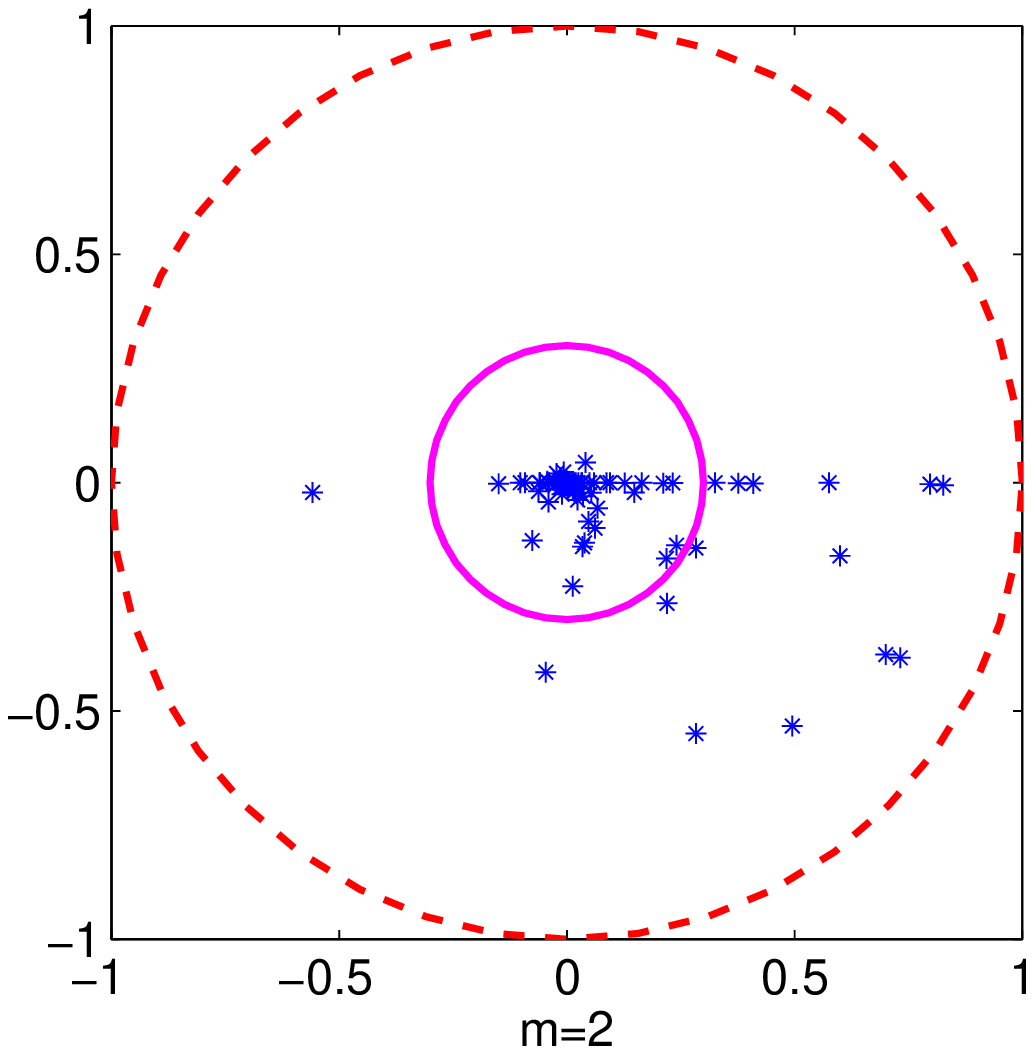}&
		\includegraphics[height=1.5in]{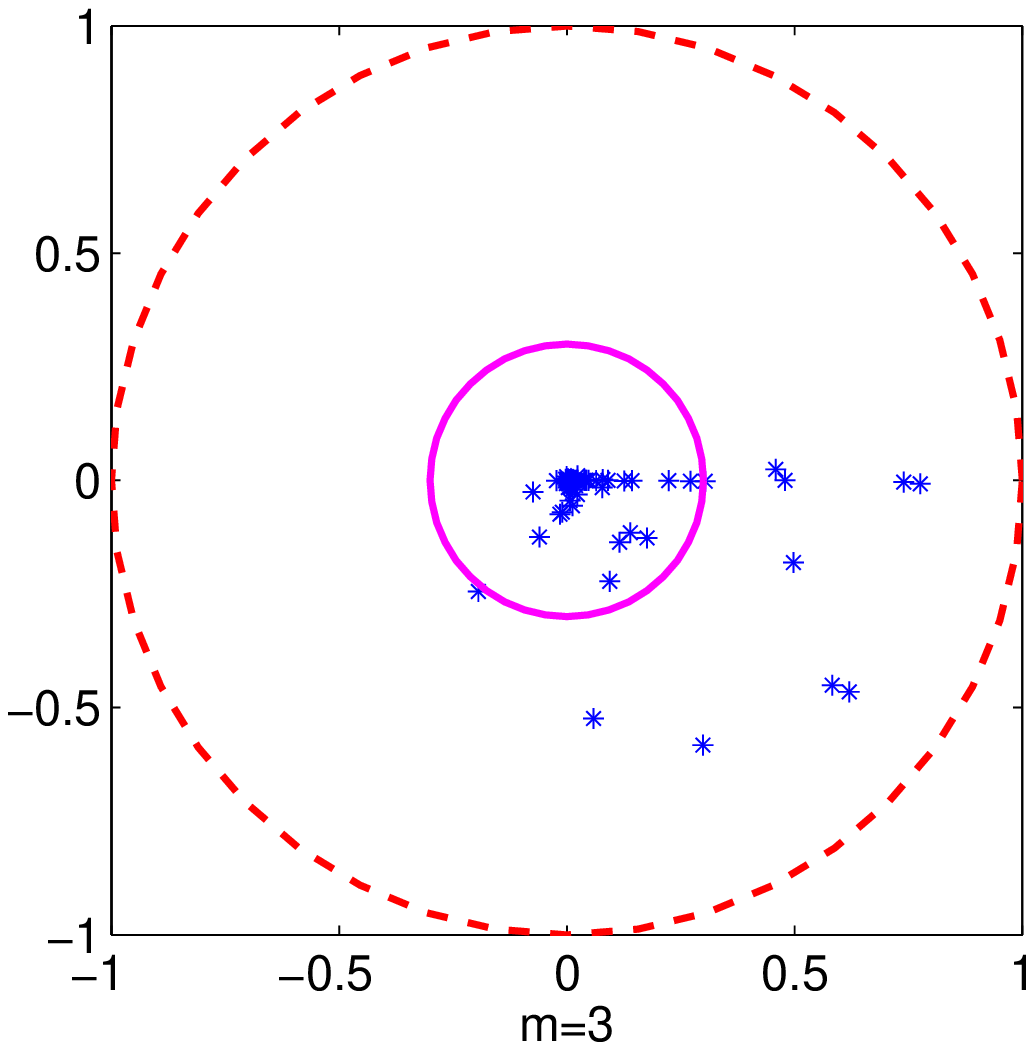}
	\end{tabular}
	\caption{{Spectrum of $E_{rr}(m)$ with different $m$, where
            the matrix $A$ is the non-symmetric \texttt{young1c}
            matrix from the SuiteSparse collection~\cite{Davis}. This
            matrix has the dimension of $841\times 841$ and the number
            of subdomains used in the partition is
            $s=5$. $E_{s}C_{0}^{-1}$ has $9$ eigenvalues with modulus
            larger than $1$ which are shown outside the red dashed
            circle in the top-left subfigure. Only the eigenvalues of
            $E_{rr}(m)$ within the unit red dashed circle are shown
            for the cases $m=1,2,3$. Here, a blue {` $\ast$'} denotes an
            eigenvalue of $E_{s}C_{0}^{-1}$, the red dashed circle has
            radius $1$ while the pink solid circle has radius $0.3$.}}%
	\label{fig:pde2961}%
\end{figure}

We first prove {an upper bound of the relative approximation accuracy
  of $S_{\text{app}}^{-1}$ to $S^{-1}$ in the next proposition.}
\begin{proposition}\label{l3}
For any matrix norm $\parallel\cdot\parallel$, the approximation accuracy of $S_{\text{app}}^{-1}$ to $S^{-1}$ satisfies the following inequality
\begin{equation}\label{Serr333}
  \frac{\parallel S^{-1}-S_{\text{app}}^{-1}\parallel}{\parallel S^{-1}\parallel}
  \leq \parallel X(m,r_k)\parallel\parallel Z(r_k)^{-1} \parallel,
\end{equation}
where
\begin{equation}\label{denote}
  X(m,r_k)=E_{rr}(m)-V_{r_k}H_{r_k}V_{r_k}^{T},~Z(r_k)=I-V_{r_k}H_{r_k}V_{r_k}^{T}.
\end{equation}
\end{proposition}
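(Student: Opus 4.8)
The plan is to start from the two expressions for $S^{-1}$ and $S_{\text{app}}^{-1}$ that are already in hand: equation \eqref{s1} gives
$S^{-1}=\big[\sum_{i=0}^{m}(C_{0}^{-1}E_{s})^{i}C_{0}^{-1}\big]\,(I-E_{rr}(m))^{-1}$,
while \eqref{Sinv} gives
$S_{\text{app}}^{-1}=\big[\sum_{i=0}^{m}(C_{0}^{-1}E_{s})^{i}C_{0}^{-1}\big]\,(I-V_{r_k}H_{r_k}V_{r_k}^{T})^{-1}$,
where the second equality in \eqref{Sinv0} has already identified $(I+V_{r_k}G_{r_k}V_{r_k}^{T})$ with $(I-V_{r_k}H_{r_k}V_{r_k}^{T})^{-1}=Z(r_k)^{-1}$. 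The common left factor $P:=\sum_{i=0}^{m}(C_{0}^{-1}E_{s})^{i}C_{0}^{-1}$ is exactly $S^{-1}(I-E_{rr}(m))$ by \eqref{s1}, so I would write $P=S^{-1}(I-E_{rr}(m))$ and avoid ever naming $P$ again.

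The key algebraic step is to subtract the two formulas and factor out what is common. Writing $E:=E_{rr}(m)$ and $W:=V_{r_k}H_{r_k}V_{r_k}^{T}$ for brevity within the proof, we have
$S^{-1}-S_{\text{app}}^{-1}=P\big[(I-E)^{-1}-(I-W)^{-1}\big]$.
Using the standard resolvent identity $(I-E)^{-1}-(I-W)^{-1}=(I-E)^{-1}(E-W)(I-W)^{-1}$ (valid since both $I-E$ and $Z(r_k)=I-W$ are assumed nonsingular), and then substituting $P(I-E)^{-1}=S^{-1}$, we obtain the clean identity
$S^{-1}-S_{\text{app}}^{-1}=S^{-1}\,(E-W)\,(I-W)^{-1}=S^{-1}\,X(m,r_k)\,Z(r_k)^{-1}$.
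From here the bound is immediate: take any submultiplicative matrix norm, apply $\|\cdot\|$ to both sides, use submultiplicativity to get $\|S^{-1}-S_{\text{app}}^{-1}\|\le \|S^{-1}\|\,\|X(m,r_k)\|\,\|Z(r_k)^{-1}\|$, and divide by $\|S^{-1}\|$ (which is nonzero since $S$, hence $S^{-1}$, is nonsingular). This yields exactly \eqref{Serr333}.

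I expect the only real subtlety — not so much an obstacle as a point to state carefully — is the justification of the resolvent identity and of replacing $P(I-E)^{-1}$ by $S^{-1}$: this rests on the nonsingularity of $I-E_{rr}(m)$ (assumed right after \eqref{s1}) and of $Z(r_k)$ (which is the content of the standing assumption in the proposition's setup), plus the fact that \eqref{s1} is an exact identity rather than an approximation. Everything else is routine application of norm inequalities. One may optionally remark that since $X(m,r_k)=E_{rr}(m)-V_{r_k}H_{r_k}V_{r_k}^{T}$ is precisely the residual of the $r_k$-step Arnoldi approximation to $E_{rr}(m)$, and since $\|X(m,r_k)\|$ in the Frobenius norm decreases monotonically in $r_k$ (as noted after \eqref{schur}) while $\|E_{rr}(m)\|=\|(E_sC_0^{-1})^{m+1}\|$ shrinks geometrically in $m$ by Theorem \ref{t2}, the right-hand side of \eqref{Serr333} can be driven small by increasing either parameter; but this discussion belongs after the proposition rather than inside the proof.
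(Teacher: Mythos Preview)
Your proposal is correct and follows essentially the same route as the paper: both arrive at the exact identity $S^{-1}-S_{\text{app}}^{-1}=S^{-1}\,X(m,r_k)\,Z(r_k)^{-1}$ and then take norms. The only cosmetic difference is that the paper rewrites $I-E_{rr}(m)=Z(r_k)-X(m,r_k)$ and uses the identity $(Z-X)^{-1}-Z^{-1}=(Z-X)^{-1}XZ^{-1}$, whereas you invoke the resolvent identity $(I-E)^{-1}-(I-W)^{-1}=(I-E)^{-1}(E-W)(I-W)^{-1}$ directly; these are the same computation in slightly different notation.
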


\begin{proof}
From (\ref{s1}) and (\ref{Sinv0}), we have
\begin{eqnarray}\label{diff}
  S^{-1}-S_{\text{app}}^{-1}
  &=&\bigg[\sum_{i=0}^{m}(C_{0}^{-1}E_{s})^{i}C_{0}^{-1}
  \bigg]\big[(I-E_{rr}(m))^{-1}-(I-V_{r_k}H_{r_k}V_{r_k}^{T})^{-1}\big] \\ \nonumber
  &=&\bigg[\sum_{i=0}^{m}(C_{0}^{-1}E_{s})^{i}C_{0}^{-1}\bigg]
  \big[\big(I-V_{r_k}H_{r_k}V_{r_k}^{T}-X(m,r_k)\big)^{-1}-(I-V_{r_k}H_{r_k}V_{r_k}^{T})^{-1}\big]  \\ \nonumber
  &=&\bigg[\sum_{i=0}^{m}(C_{0}^{-1}E_{s})^{i}C_{0}^{-1}\bigg]\bigg[\big(Z(r_k)-X(m,r_k)\big)^{-1}-Z(r_k)^{-1}\bigg] \\ \nonumber
  &=& \bigg[\sum_{i=0}^{m}(C_{0}^{-1}E_{s})^{i}C_{0}^{-1}\bigg]\big(Z(r_k)-X(m,r_k)\big)^{-1}X(m,r_k)Z(r_k)^{-1} \\ \nonumber
  &=& \bigg[\sum_{i=0}^{m}(C_{0}^{-1}E_{s})^{i}C_{0}^{-1}\bigg](I-E_{rr}(m))^{-1}X(m,r_k)Z(r_k)^{-1}  \\ \nonumber
   &=& S^{-1}X(m,r_k)Z(r_k)^{-1} .
\end{eqnarray}
Using a matrix norm, this yields
\[
  \parallel S^{-1}-S_{\text{app}}^{-1}\parallel\leq \parallel S^{-1}\parallel\parallel X(m,r_k)
\parallel\parallel Z(r_k)^{-1} \parallel, \]
from which \eqref{Serr333} follows.
\end{proof}

Next, we provide two numerical experiments to illustrate Proposition
\ref{l3}. The Frobenius norm is employed for both tests and we denote by $\Delta(m,r_k)$ the upper bound $\parallel X(m,r_k)\parallel\parallel Z(r_k)^{-1} \parallel$ in Proposition \ref{l3}.
The first test is a 3D Laplacian matrix and the second one
is a shifted 3D Laplacian matrix.
Both matrices have size of
$2,000\times 2,000$. In the tests, we fix $r_k=15$ and $s=5$ and change
numbers of terms used in the power series expansion from $m=3$ to
$m=5$. For the Laplacian matrix, we have $\Delta(3,15)=0.49$ when
$m=3$ and $\Delta(5,15)=0.15$ when $m=5$. For the shifted Laplacian
matrix, $E_{rr}(m)$ has $11$ eigenvalues with modulus
  larger than $1$. Since $r_k$ is larger than $11$, when $m=3$ and $m=5$, we have $\Delta(3,15)=0.72$
and $\Delta(5,15)=0.665$, respectively. These two tests verify that
the approximation is more accurate if $m$ increases as long as the rank $r_k$
is larger than the number of the eigenvalues of $E_{rr}$ with modulus greater than $1$. From the results of the above two specific problems, we can
  see that the upper bound $\Delta (m,r_k)$ is smaller in general  for
  SPD matrices than for indefinite matrices.

\subsubsection{Spectral analysis of the preconditioned Schur complement}\label{spectral}
The preconditioning effect of the proposed PSLR preconditioner depends directly on the eigenvalue distribution of $S_{\text{app}}^{-1}S$. When the eigenvalues of $S_{\text{app}}^{-1}S$ are clustered or close to one, one can expect a fast convergence for Krylov subspace methods.

From (\ref{s1}) and (\ref{Sinv0}), we have
\begin{eqnarray}\label{SinvS}\tiny
 S_{\text{app}}^{-1}S
 &=&   \bigg[\sum_{i=0}^{m}(C_{0}^{-1}E_{s})^{i}C_{0}^{-1}\bigg](I-V_{r_k}H_{r_k}V_{r_k}^{T})^{-1}
 (I-E_{rr}(m))\bigg[\sum_{i=0}^{m}(C_{0}^{-1}E_{s})^{i}C_{0}^{-1}\bigg]^{-1}\nonumber\\
   &=& \bigg[\sum_{i=0}^{m}(C_{0}^{-1}E_{s})^{i}C_{0}^{-1}\bigg]Z(r_k)^{-1}\big(Z(r_k)-X(m,r_k)\big)
   \bigg[\sum_{i=0}^{m}(C_{0}^{-1}E_{s})^{i}C_{0}^{-1}\bigg]^{-1},
\end{eqnarray}
where $Z(k)$ and $X(m,r_k)$ are the matrices defined by (\ref{denote}).
Obviously, it follows from (\ref{SinvS}) that $S_{\text{app}}^{-1}S$ is similar to
$$Z(r_k)^{-1}\big(Z(r_k)-X(m,r_k)\big)=I-Z(r_k)^{-1}X(m,r_k), $$
which implies that $$\lambda(S_{\text{app}}^{-1}S)=1-\lambda(Z(r_k)^{-1}X(m,r_k)).$$

When the eigenvalues of $X(m,r_k)$ (or $Z(r_k)^{-1}X(m,r_k)$) are close to
  zero, the eigenvalues of $S_{\text{app}}^{-1}S$ are clustered around
  $1$. To illustrate the influence of the approximation accuracy of
  $S_{\text{app}}^{-1}$ on the eigenvalue distribution of
  $S_{\text{app}}^{-1}S$, we display the eigenvalues of
  $S_{\text{app}}^{-1}S$ for the same 3D Laplacian matrix presented in
  Section \ref{approximation} with $n=2000$, $r_k=15$ and $s=5$ in
  Figure \ref{eigenvalue}. The numbers of terms used in the power
  series expansion are $m=3$ and $m=5$ for two different cases,
  respectively.  As can be seen from Figure \ref{eigenvalue}, the
  eigenvalues in the right subfigure are more clustered than those in
  the left subfigure. This further illustrates the fact that the approximation
  is improved if $m$ increases but the rank $r_k$ is fixed. For this
  specific problem, the PSLR preconditioned GMRES method converges in
  $8$ and $17$ iterations, respectively, when $m$ is  set to $5$ and
  $3$ and iteration is stopped when the initial residual
  is reduced by $10^{8}$.
  As another illustration, Figure \ref{eigenvalues} depitcs the
  eigenvalues of $S_{\text{app}}^{-1}S$ for the same shifted 3D
  Laplacian matrix presented in Section \ref{approximation} with
  $n=2000$, $r_k=15$ and $s=5$. For this test, the PSLR preconditioned
  GMRES method with $m = 5$ converges in $13$ iterations and the
  iteration number increases to $24$ when $m$ is reduced to $3$, using the
  same stopping criterion as earlier.

\begin{figure}[ptbh]
\centering\tabcolsep=0mm
	\begin{tabular}
		[r]{cc}%
		\includegraphics[height=1.80in]{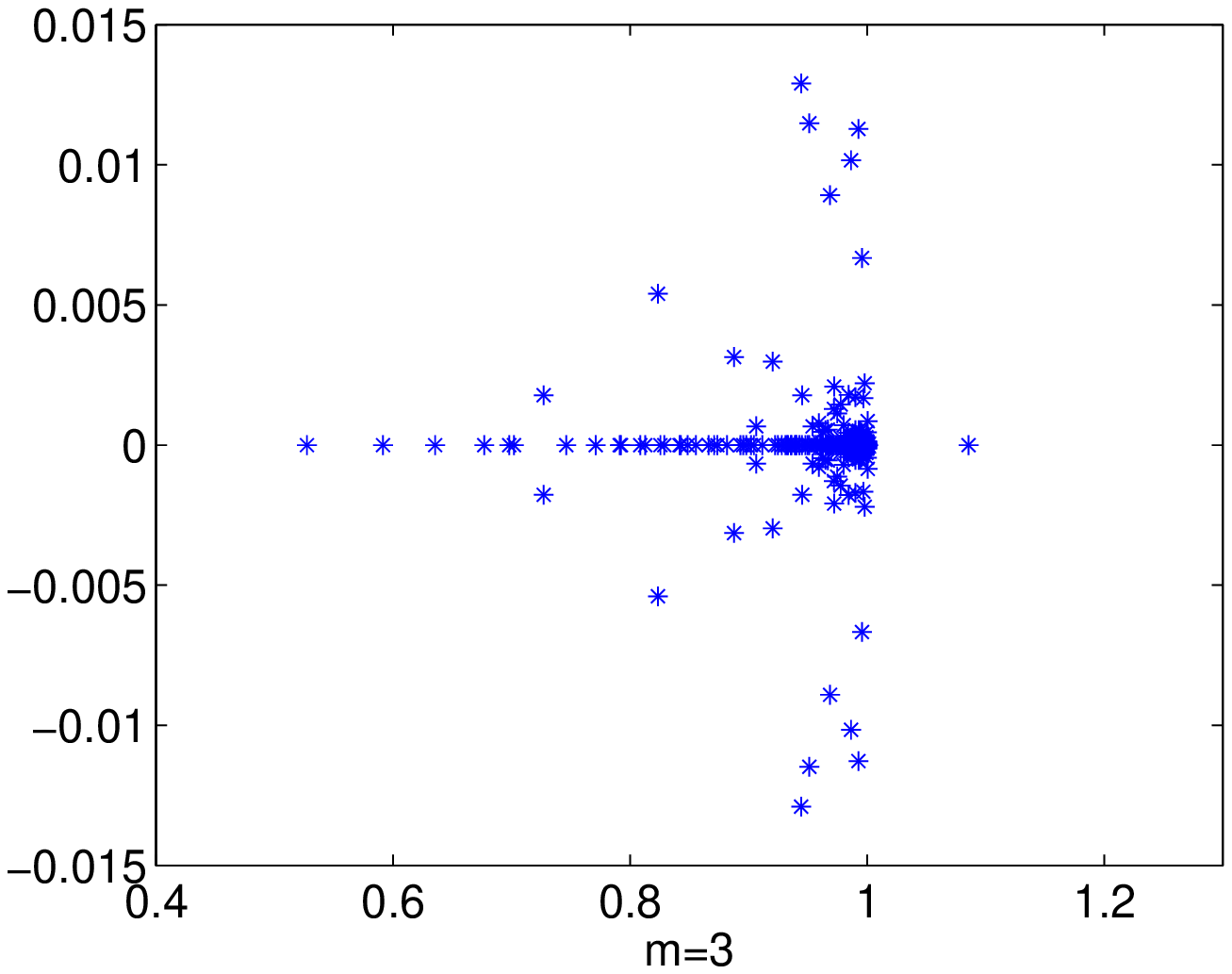}&
		\includegraphics[height=1.80in]{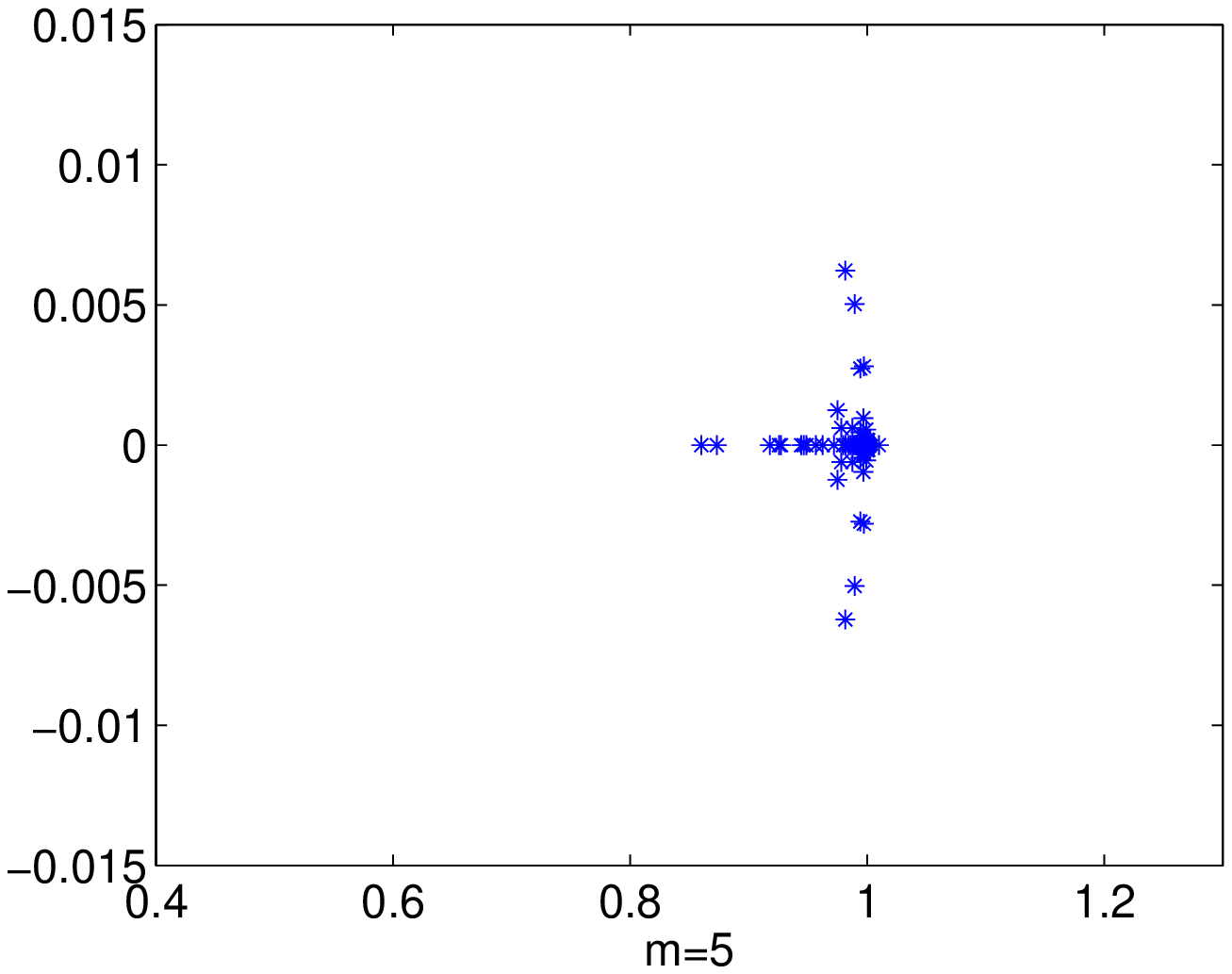}
	\end{tabular}
	\caption{The eigenvalue distribution of $S_{\text{app}}^{-1}S$ for 3D Laplacian matrix with $r_k=15$.
The number of terms used in the power series expansion are $3$ and $5$ for
left subfigure and right subfigure, respectively.}%
	\label{eigenvalue}%
\end{figure}

\begin{figure}[ptbh]
\centering\tabcolsep=0mm
	\begin{tabular}
		[r]{cc}%
		\includegraphics[height=1.80in]{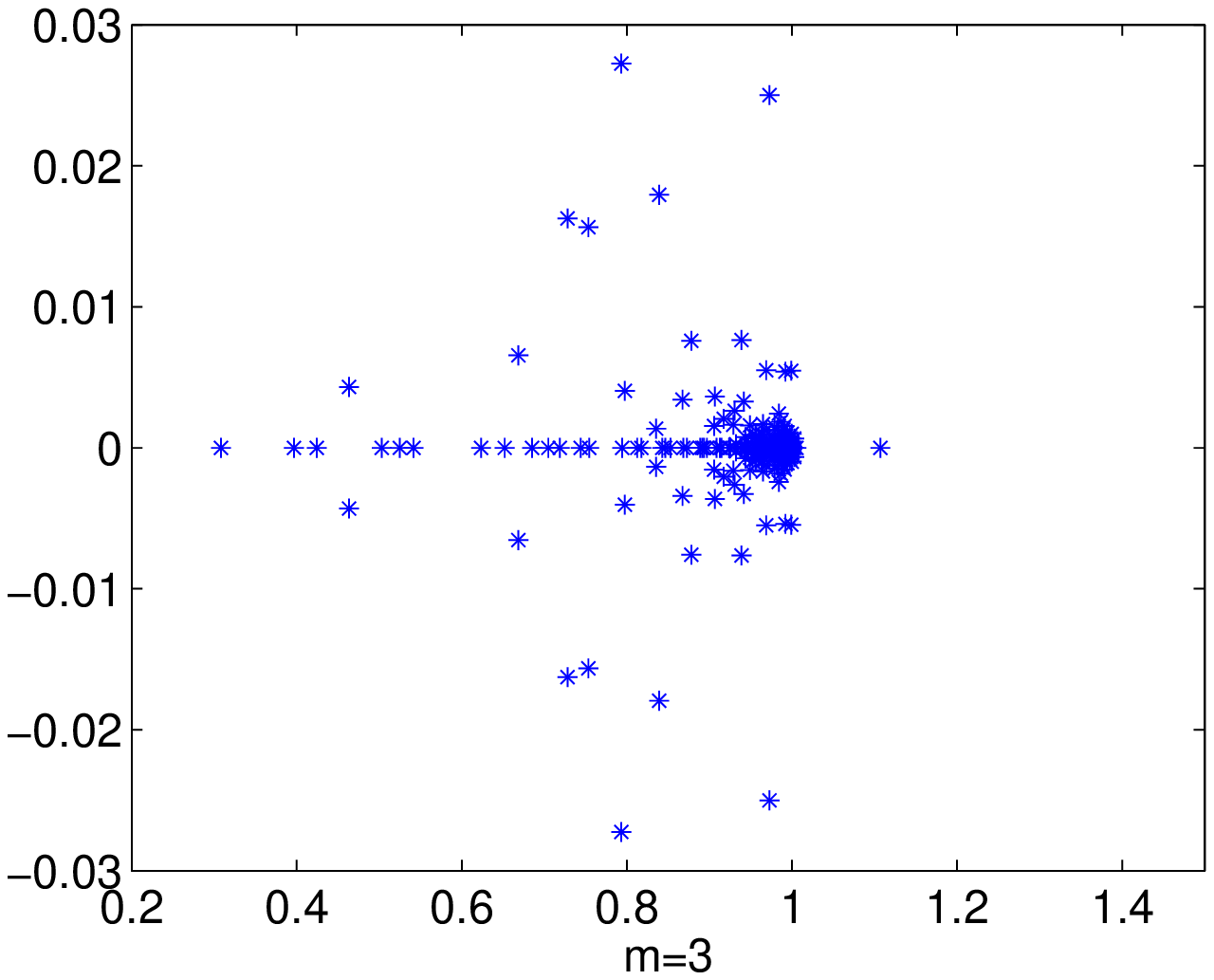}&
		\includegraphics[height=1.80in]{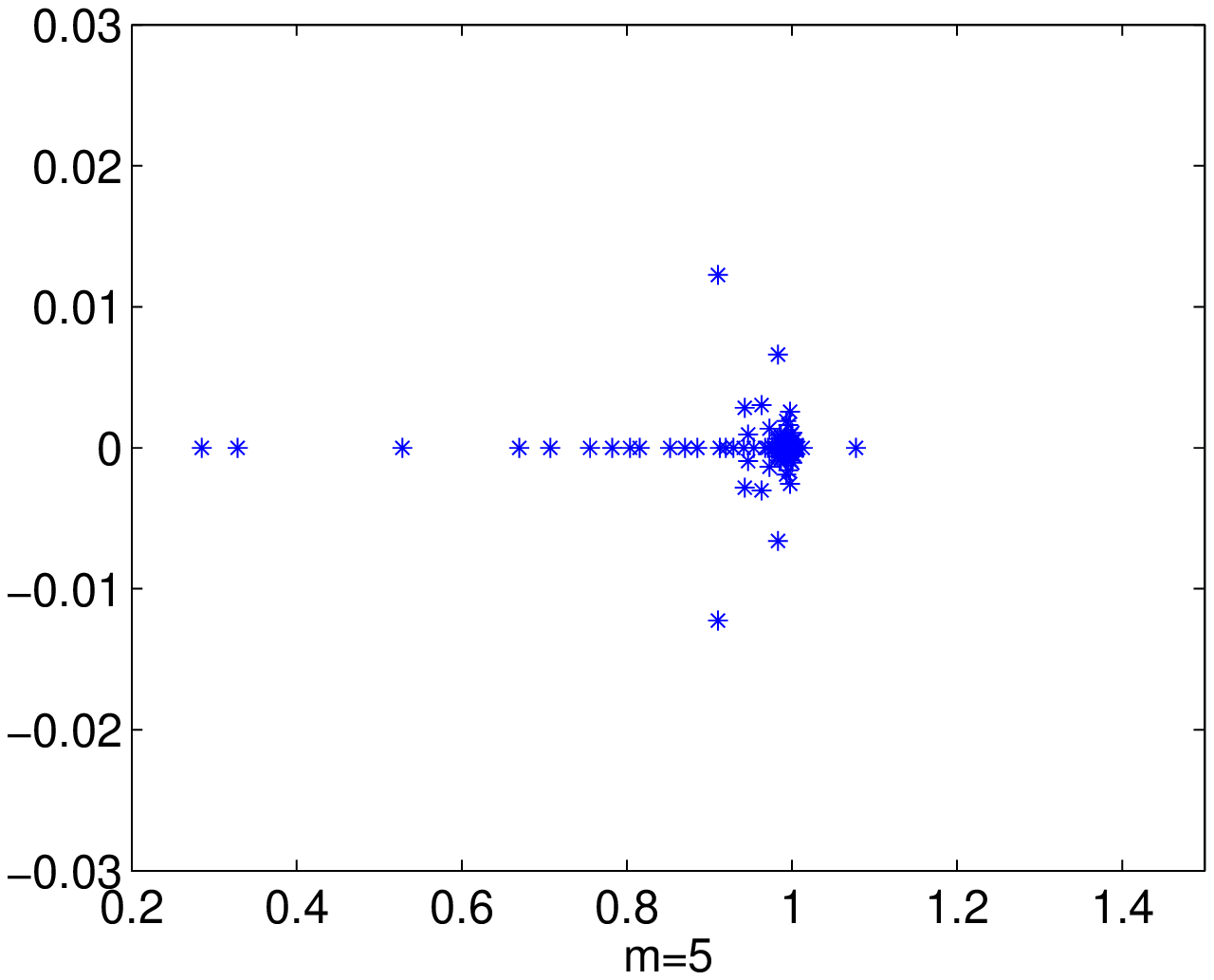}
	\end{tabular}
	\caption{The eigenvalue distribution of $S_{\text{app}}^{-1}S$ for shifted 3D Laplacian matrix with $r_k=15$.
The number of terms used in the power series expansion are $3$ and $5$ for
left subfigure and right subfigure, respectively.}%
	\label{eigenvalues}%
\end{figure}

\subsection{Construction and application of the PSLR preconditioner}
This section provides a short description of the construction of
the PSLR preconditioner and its
application. Recall from \eqref{equ:solve},
the application of the PSLR preconditioner on a vector $b$ follows the
following two steps:
\begin{equation}\label{eq:application}
  \left\{\begin{array}{lll}
  y= S_{\text{app}}^{-1}(g-FB^{-1}f),  \\[2mm]
  x=B^{-1}(f-Ey),
\end{array}\right.
\end{equation}
where $b$ is partitioned into $(f^{T},g^{T})^{T}$
according to the sizes of $B$ and $C$.

The scheme \eqref{eq:application} requires three linear system solutions, two associated with $B$ and one associated with $S_{\text{app}}$. Applying $S_{\text{app}}^{-1}$ on a vector based on \eqref{Sinv} involves solving $m+1$ linear systems associated with $C_0$. Since both $B$ and $C_0$ are block diagonal, the construction of the PSLR preconditioner starts with the ILU factorization of these diagonal blocks. The computed ILU factors can then be used in the Arnoldi procedure to compute $V_k$ and $H_k$ associated with $S_{\text{app}}^{-1}$. The construction algorithm is summarized in Algorithm \ref{alg4}.

\begin{algorithm}
	\caption{Construction of PSLR preconditioner}\label{alg4}
	\begin{algorithmic}[1]
	   \State Apply domain decomposition to reorder $A$ with $s$ subdomains
         \State {\bf For} $i=1:s$ Do
        \State ~~~~~$[L_{i}^{B},U_{i}^{B}]=\text{ilu}(B_{i})$
        \State ~~~~~$[L_{i}^{C_{0}},U_{i}^{C_{0}}]=\text{ilu}(C_{i})$
        \State {\bf EndDo}
		\State Apply Arnoldi procedure to compute:
		\State ~~~~~~~~~~$[V_{r_k},H_{r_k}]=\text{Arnoldi}(E_{rr}(m),r_k)$
		\State Compute $G_{r_k}=(I-H_{r_k})^{-1}-I$
	\end{algorithmic}
\end{algorithm}

The computational cost of the PSLR construction process is dominated
by ILU factorization and the triangular solves involved in applying
the operator $E_{rr}(m)$ in the Arnoldi process. Since these operations can
be performed independently among different diagonal blocks in $B$ and
$C_0$, Algorithm \ref{alg4} is highly parallelizable.

%

Algorithm \ref{alg3} describes the application of the PSLR
preconditioner on a vector $b$. Besides linear system solutions
associated with $B$ and $C_0$, the remaining operations are
matrix-vector multiplications associated with sparse matrices $E$,
$ F$ and dense matrices $V_{r_k}$ and $G_{r_k}$. Since both $E$ and $F$ are in
block diagonal forms, this application algorithm is also highly
parallizable.
\begin{algorithm}
	\caption{Computing $z=\text{PSLR}(b)$}\label{alg3}
	\begin{algorithmic}[1]
		\State Partition $b=\begin{pmatrix}
                                    f \\
                                   g \\
                                  \end{pmatrix}$
		\State Compute  $y=(g-FB^{-1})f$
		\State Update  $y\leftarrow y+V_{r_k}(G_{r_k}(V_{r_k}^{T}y))$
		\State Compute
{
$$y\leftarrow\sum_{i=0}^{m}(C_{0}^{-1}E_{s})^{i}C_{0}^{-1}y
$$
}
		\State Solve $Bx=f-Ey$
        \State Set $z=\begin{pmatrix}
                                    x \\
                                    y \\
                                  \end{pmatrix}$
	\end{algorithmic}
\end{algorithm}

\section{Numerical examples}\label{sec:num}

In this section, we report numerical experiments to show the
efficiency and robustness of the PSLR preconditioner. The test
problems include symmetric and nonsymmetric cases.  The PSLR
preconditioner was implemented in  C++ and compiled with
the  -O3 optimization option.  All the experiments were run on a
single node of the \texttt{Mesabi} Linux cluster at the Minnesota
Supercomputing Institute, which has 64 GB or memory and two Intel
2.5 GHz Haswell processors with 12 cores each.  The \texttt{PartGraphKway}
from the METIS \cite{Karypis} package was used to partition matrices.  BLAS and LAPACK
routines from Intel Math Kernel Library (MKL) were used to enhance the performance on multiple cores. Thread-level parallelism was realized by OpenMP. The preconditioner construction time consists of the
incomplete LU factorizations of matrices $B_{i},C_{i}$, $i=1,2,\ldots,s$,
and the computation of $V_{r_k}$ and $G_{r_k}$.
In actual computations, the right-hand side $b$ was chosen randomly such
that $Ax=b$ with $x$ being a random vector, and the initial guess
on $x$ was always taken as a zero vector in the Krylov subspace methods.

For the SPD problems, we compare the PSLR preconditioner with the MSLR
preconditioner \cite{m8} and the incomplete Cholesky factorization
preconditioner (ICT) with threshold dropping, and the conjugate
gradient (CG) method as the accelerator.  For general problems, we
compare PSLR  with the GMSLR preconditioner and the
incomplete LU factorization preconditioner (ILUT) with threshold
dropping, using   GMRES \cite{Saad-book2} as the accelerator. BLAS and LAPACK
routines from Intel Math Kernel Library (MKL) were used in incomplete factorizations and MSLR and GMSLR precoditioners. MSLR and GMSLR preconditioners were also parallelized with OpenMP.

In the rest of this section, the following notation is used:
\begin{itemize}
  \item its: the number of iterations of GMRES or CG to reduce the initial residual norm
   by $10^{8}$. Moreover, the "F" indicates that GMRES or CG failed to
   converge within 500 iterations;
    \item    o-t: wall clock time to reorder the matrix;
  \item p-t: wall clock time for the preconditioner construction;

  \item i-t: wall clock time for the iteration procedure. If GMRES or CG
  fails to converge within 500 iterations, then we denote this time by "--";

  \item t-t: total wall clock time, i.e., the sum of the preconditioner construction time and the
  iteration time;
  \item $r_{k}$: the rank used in the low-rank correction terms;

  \item $m$: the number of terms used in the power series expansion;

  \item fill (total): the total fill-factor defined as $\frac{\text{nnz}(prec)}{\text{nnz}(A)}$;

  \item fill (ILU): the fill-factor comes from ILU decompositions
  defined as $\frac{\text{nnz}(ILU)}{\text{nnz}(A)}$;

  \item fill (Low-rank): fill-factor comes from the low-rank correction terms
  defined as $\frac{\text{nnz}(LRC)}{\text{nnz}(A)}$.
\end{itemize}
Here $\text{nnz}(X)$ denotes the number of nonzero entries of
a matrix $X$. Moreover,
\begin{eqnarray*}
  && \text{nnz}(ILU)=\sum_{i=1}^{s}\big[\text{nnz}(L_{i}^{B})+\text{nnz}(U_{i}^{B})+
  \text{nnz}(L_{i}^{C_{0}})+\text{nnz}(U_{i}^{C_{0}})\big],\\
  && \text{nnz}(LRC)=\text{nnz}(V_{r_k})+\text{nnz}(G_{r_k}), \\
  && \text{nnz}(prec)=\text{nnz}(ILU)+\text{nnz}(LRC).
\end{eqnarray*}
Note that we employ the notation $\text{nnz}(V_{r_k})$
and $\text{nnz}(G_{r_k})$ for dense matrices. The  term
\emph{fill-factor}, which is meant to reflect memory usage,
mixes traditional fill-in (ILU) along with the  additional
memory needed to store the (dense) low-rank correction matrices.

\subsection{Test 1}
Consider the following symmetric problem:
\begin{eqnarray}\label{test1}
-\triangle u-\beta u &=& f ~\text{in}~\Omega,\\ \nonumber
u &=& 0  ~\text{on}~\partial\Omega.
\end{eqnarray}
Here $\Omega=[0,1]^{3}$ and these PDEs were discretized by the 7-point
stencil.  The discretized operation is equivalent to shifting the
discretized Laplacian by a shift of $h^{2}\beta I$ for a mesh spacing of $h$.

\subsubsection{Effect of $s$}\label{subdomain}

In this subsection, we look into the effect of the number $s$ of
subdomains on the effectiveness of the PSLR preconditioner.  We solve
(\ref{test1}) with the shift of $0.05$ on a 50$^{3}$ grid by the
GMRES-PSLR method.  The resulting coefficient matrix is
indefinite. The number of terms used in the power series expansion is
$m=3$, and the rank for the low-rank correction terms is fixed at $15$.

\begin{table}[h]
	\caption{The fill-factor, iteration counts and CPU time for
solving (\ref{test1}) with $\text{shift}=0.05$ on a 50$^{3}$ grid by
the GMRES-PSLR method ($m=3$). Here, the rank in  the low-rank
correction terms is $15$, and the dropping threshold in the
incomplete LU factorizations is $10^{-2}$.
		\label{tab:nd}}
	\begin{center}
		\small
		\begin{tabular}{r|c|c|c|c|c|c}
			\hline
			s & fill (ILU)& fill (Low-rank)& fill (total) & its &p-t&i-t
			\tabularnewline
			\hline
                                                                                  5 &    2.68   &   .19&    2.88  &  90 & .11 & 1.10 \tabularnewline
                                                                                 15&    2.45   &   .37  & 2.83    &  89 & .13 & .64 \tabularnewline
			25&    2.30   &  .49 &    2.79 &   86 & .15 & .56\tabularnewline
			35 &    2.23   &   .55 &   2.78  &  83 &  .16& .54 \tabularnewline
			45 &    2.17   &   .60  &  2.77 &    80 & .19 &.65 \tabularnewline
			55 &    2.11   &   .66 &  2.77  & 78& .20&.67\tabularnewline
			\hline
		\end{tabular}
	\end{center}
\end{table}
We can see from Table \ref{tab:nd} that the
fill-factor from ILU decompositions decreases monotonically
while the fill-factor
from low-rank correction terms increases when $s$ increases
from $5$ to $55$. This is because the size of
each $B_{i}$ and $C_{i}$, $i=1,2,\ldots,s$ is smaller from a larger $s$,
which reduces the storage and the computational cost for the ILU
factorizations. A larger $s$ also results in a larger Schur complement $S$, which implies that the matrix $V_{r_k}$
has more rows. That is why the fill-factor
from the low-rank correction terms increases when $s$ becomes larger. These experiments also illustrate the fact that
the performance of the PSLR preconditioner does not vary much with the number of subdomains used.

\subsubsection{Effect of $m$}\label{m}

The number of terms used in the power series expansion is also an
important factor, as was previously discussed.  We investigate this factor
by solving the same problem as in Section \ref{subdomain} with the rank
used in the low-rank correction part being fixed at 15.  The iteration
counts and CPU times for different $m$'s are given in Table
\ref{tab:steps}. As can be observed, the iteration number
decreases from 171 to 78 when $m$ increases from $0$ to $5$. This can be
attributed to the improved clustering of the spectrum of the preconditioned Schur complement
as  the number of terms used in the power series
expansion increases.  Meanwhile, the time to construct the PSLR
preconditioner increases slightly.  Since the iteration number is
reduced considerably when $m$ increases from $0$ to some positive
constant and then reduced slightly after that, we expect that the
iteration time decreases first and then increases. This is verified
by the numerical results in Table \ref{tab:steps}. As is seen from
Table \ref{tab:steps}, the iteration time  first goes down from $.90$
to $.57$ as $m$ increases from $0$ to $3$ and then increases from
$.57$ to $.63$ when $m$ increases from $3$ to $5$. The total time has
the same trend as that of the iteration time. The
results in Table \ref{tab:steps} are plotted in  Figure \ref{fig:m}. In the
figure we can see  that $m=3$ is optimal  for this
test, in terms of  CPU time.  In general, there is a similar pattern and
$m$ should not be taken too large for the sake of a better  overall
performance.

\begin{table}[h]
 	\caption{Iteration counts and CPU times for solving (\ref{test1})
 with $\text{shift}=0.05$ on a $50^{3}$ grid by the GMRES-PSLR method, in which
 $s=35$, the dropping threshold in the incomplete LU
 factorizations is $10^{-2}$, and the rank in
 the low-rank correction part is 15.
	\label{tab:steps}}
	\begin{center}
		\tabcolsep5mm
		\def\arraystretch{1.0}
		\small
		\begin{tabular}{c|c|c|c|c}
			\hline
			m& its&p-t& i-t& t-t
			\tabularnewline
			\hline
			0 &  171 &  .11   & .90 &  1.01\tabularnewline
			1 &  109 &  .12   & .61  &  .73\tabularnewline
			2 &  96 & .13   & .59 &.72\tabularnewline
			3 & 86  &  .14&  .57&.71 \tabularnewline
			4 & 81 & .16   & .60 &.76\tabularnewline
			5 & 78  &  .18 &  .63&.81 \tabularnewline
			\hline
		\end{tabular}
	\end{center}
\end{table}

\begin{figure}[ptbh]
	\begin{tabular}
		[r]{c}%
		\includegraphics[height=2.10in]{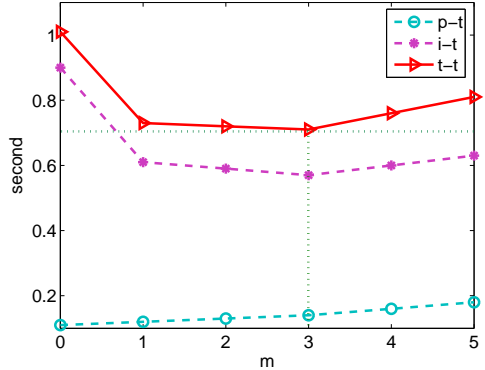}
	\end{tabular}
	\caption{The preconditioner construction time, the iteration time,
and the total time for solving (\ref{test1}) with $\text{shift}=0.05$
on a $50^{3}$ grid with different $m$'s by the GMRES-PSLR method.}%
	\label{fig:m}
\end{figure}

\subsubsection{Effect of $r_{k}$}\label{sec:rank}

In this subsection, we consider the effect of the rank used in
the low-rank correction terms on the PSLR preconditioner. Here
we consider the same test problem used in the previous two subsections but with different $r_k$'s. We observe from Table \ref{tab:rank}
that the iteration number decreases as $r_{k}$ increases from $0$ to $75$. The fill-factor
from ILU decompositions keeps the same value $2.44$ since we fix the number
of subdomains. On the other hand, the fill-factor from the low-rank terms and
the time to compute low-rank correction terms increase as the rank becomes larger.
In the meantime, the iteration time and the total CPU time decrease as $r_{k}$
increases from $0$ to $15$ and then increase. This indicates that there is no
need to take a very large rank in practice

\begin{table}[h]
	\caption{The fill-factor, iteration counts and CPU time for solving (\ref{test1})
with $s=0.05$ on a $50^{3}$ grid by the GMRES-PSLR method with $m=3$, in which $s=35$
and the dropping threshold in the incomplete LU factorizations is $10^{-2}$.
	\label{tab:rank}}
	\begin{center}
		\tabcolsep5mm
		\def\arraystretch{1.0}
		\small
		\begin{tabular}{c|c|c|c|c|c|c}
			\hline
			$r_{k}$&fill (ILU) &fill (Low-rank)&its&p-t& i-t& t-t
			\tabularnewline
			\hline
			0 & 2.24 &.00&92 &  .08   & .81 &  .89\tabularnewline
			15 &  2.24&.55&86 &  .13   & .57  &  .70\tabularnewline
			30 & 2.24 &1.10&83 & .19   & .59 &.78\tabularnewline
			45 &2.24 &1.65&80  &  .31&  .61&.92\tabularnewline
			60 &2.24 &2.20&78 & .33   & .60 &.93\tabularnewline
			75 &2.24&2.75& 75  &  .39 &  .60&.99 \tabularnewline
			\hline
		\end{tabular}
	\end{center}
\end{table}

\begin{table}[h]
	\caption{{The fill-factor, iteration counts and CPU time for solving (\ref{test1})
with $s=0.14$ on a $50^{3}$ grid by the GMRES-PSLR method with $m=3$, in which $s=35$
and the dropping threshold in the incomplete LU factorizations is $10^{-2}$.}
	\label{tab:rank-rk}}
	\begin{center}
		\tabcolsep5mm
		\def\arraystretch{1.0}
		\small
		\begin{tabular}{c|c|c|c|c}
			\hline
			$r_{k}$&fill (ILU) &fill (Low-rank)&its& t-t
			\tabularnewline
			\hline
			0 &3.07 &.00& F   & -- \tabularnewline
			15 &3.07 &.55& 346   &  8.90 \tabularnewline
            30 &3.07 &1.10& 310   &  8.06 \tabularnewline
            45 &3.07 &1.65& 266   &  7.01 \tabularnewline
            60 &3.07 &2.20& 220   &  5.65 \tabularnewline
            75 &3.07 &2.75& 199   &  5.69 \tabularnewline
			\hline
		\end{tabular}
	\end{center}
\end{table}

In addition, Table \ref{tab:rank-rk} shows the benefit of
incorporating low-rank corrections in the PSLR preconditioner when
solving highly indefinite linear systems. Note that the PSLR
preconditioner reduces to the Neumann polynomial preconditioner when
the rank $r_{k}$ is equal to zero. Results in Table \ref{tab:rank-rk}
show that the low-rank correction technique can greatly improve the
performance and robustness of the classical Neumann polynomial
preconditioner even when the rank $r_k$ is smaller than the number of the eigenvalues of $E_{rr}$ with modulus greater than $1$. For example, the GMRES-PSLR combination (full GMRES is
used) fails to converge when there is no low-rank correction
applied. Here, the shift for the grid $50^{3}$ is set to  $.14$, in
which case the shifted discretized operator has $78$ negative
eigenvalues.

\subsubsection{Effect of the number of threads}\label{sec:threads}
We now examine the effect of the  number of threads on the performance, {when parallelization is achieved through openMP.}
Table \ref{tab:nt} shows the total execution time as the number of
threads increases from 4 to 24, when solving Problem
  (\ref{test1}) with $s=0.05$ on a $50^{3}$ grid. The rank here is
  taken as $r_{k}=15$. As one can see from Table \ref{tab:nt}, the
  total wall clock time decreases as the number of threads increases.
   For this case, the total fill factor is $2.79$ ($2.24$ for ILU and
   $0.55$ for the low-rank part) and the iteration number is 86
   (regardless of the number of threads).
 As expected, the execution time for GMRES-PSLR is reduced
 when more threads are used, due to parallelism.
 So, the number of threads used in our
 numerical experiments is taken as the number of cores, i.e., $24$. {Note that the nodes used for the experiment have 12 cores, but due to hyperthreading
 up to 24 threads can be efficiently executed in parallel as is shown by the
 experiment.
}

\begin{table}[h]
  \caption{
    Execution time as a function of the number of threads
    for solving (\ref{test1})
    with $s=0.05$ on a $50^{3}$ grid by the GMRES-PSLR method with $m=3$,
    in which $s=35$, $r_k=15$
    and the dropping threshold in the incomplete LU factorizations is $10^{-2}$.
     \label{tab:nt}}
  \begin{center}
    \begin{tabular}{c|c}
      \hline
       Threads &  t-t \\ \hline
      4  &  5.02 \\
      8  &  2.26 \\
      16 &  1.28\\
      24 &  .70 \\ \hline
    \end{tabular}
  \end{center}
\end{table}

\subsubsection{Laplacian matrices}\label{Laplacian}

We now test some general 3D Laplacian matrices
to show the efficiency of the PSLR preconditioner.
We solve (\ref{test1}) with $\beta>0$,
where the corresponding problems are symmetric indefinite. For these
problems, the discretized Laplacian was shifted
by $h^{2}\beta I$ for mesh
size $h$. {The numbers of negative eigenvalues are $20,69,133$ for grids $32^{3}, 64^{3}$ and $128^{3}$, respectively.} Here, we set $m=3$, $r_k=15$ and $s=35$ in the PSLR preconditioner. As we see from Table \ref{tab:lap3d}, the PSLR
preconditioner outperforms ILUT and GMSLR preconditioners
for solving the resulting indefinite problems.
\begin{table}[h]
	\caption{Comparisons of {\rm PSLR} with $m=3$, $r_k=15$ and $m=35$, {\rm ILUT} and
		{\rm GMSLR} preconditioners for
		solving symmetric indefinite linear systems
from the 3-D shifted Laplacians \eqref{test1}.
		\label{tab:lap3d}}
	\begin{center}
		\def\arraystretch{1.5}
		\small
		\centering\tabcolsep2.1pt
		\begin{tabular}{c|c|ccccc|crcc|crccccc}
			\hline
			Mesh &  \text{shift} &
			\multicolumn{5}{c|}{PSLR} & \multicolumn{4}{c|}{ILUT} & \multicolumn{7}{c}{GMSLR}\tabularnewline
			&     & fill & its &o-t & p-t  & i-t  &fill &its  & p-t  & i-t &lev & $r_k$&fill &its &o-t& p-t  &i-t
			\tabularnewline
			\hline
			$32^{3}$  &  0.16  & 2.76  & 97  &.02 &.06 & .23 & 2.80 & 109 &.03  & .73 &7  &16 &2.75  & 106 &.03& .08  & .53\tabularnewline
			$64^{3}$  &$0.08$    &2.85  &288 &.15 &.26  &5.42& 2.86& 341& .29  & 25.72  & 10 & 16& 2.89 & 315& .22&.93& 18.57 \tabularnewline
			$128^{3}$  & 0.03   & 3.15 & 318  &.42&3.45 & 26.62 & 3.15 & F  & 2.16  & -- &13&16& 3.17  & F &.51& 5.32  & -- \tabularnewline
			\hline
		\end{tabular}
	\end{center}
\end{table}
This is because the iteration number and the construction time of the PSLR
preconditioner are much lower than those used by the other two preconditioners.
We found that the ILUT and GMSLR preconditioners cannot even converge
when the mesh size is $128^3$ and $\text{shift}=0.03$, in which case
the number of negative eigenvalues is $133$.

\subsection{Test 2}
We consider the shifted convection-diffusion equation below
\begin{eqnarray}\label{test2}
-\triangle u-\gamma\cdot\nabla u-\beta u &=& f ~\text{in}~\Omega,\\ \nonumber
u & =& 0  ~\text{on}~\partial\Omega,
\end{eqnarray}
which is a nonsymmetric problem. This equation is discretized by the standard 7-point stencil
in 3D, where $\Omega=[0,1]^{3}$ and $\gamma\in \mathbb{R}^{3}$.

Now we present more tests to illustrate the efficiency of PSLR
when  solving shifted convection-diffusion
equations. Here, $\gamma$ is set to $(0.1,0.1,0.1)$ and the
$\text{shift}$ is taken as $0.16,0.08,0.03$ for grid
$32^{3}, 64^{3}, 128^{3}$, respectively. Here, we fixed $m=3$, $r_k=15$ and $m=35$ in the PSLR preconditioner. As is seen from Table
\ref{tab:conv}, the PSLR preconditioner outperforms GMSLR and ILUT
preconditioners. Again GMRES does not converge with the
GMSLR and ILUT preconditioners for the case when the $\text{shift}=0.03$
and the mesh size is $128^3$.

\begin{table}[h]
	\caption{Comparisons of {\rm PSLR} with $m=3$, $r_k=15$ and $m=35$, {\rm ILUT} and
		{\rm GMSLR} preconditioners for nonsymmetric indefinite linear systems from the discretized 3-D shifted convection-diffusion equation \eqref{test2}.
		\label{tab:conv}}
	\begin{center}
		\def\arraystretch{1.5}
		\small
		\centering\tabcolsep2.1pt
		\begin{tabular}{c|c|ccccc|crcc|crccccc}
			\hline
			Mesh & \text{shift} &
			\multicolumn{5}{c|}{PSLR} & \multicolumn{4}{c|}{ILUT} & \multicolumn{7}{c}{GMSLR}\tabularnewline
			&     & fill & its &o-t & p-t  & i-t &fill &its  & p-t  & i-t &lev &$r_k$ & fill &its &o-t& p-t  &i-t
			\tabularnewline
			\hline
			$32^{3}$  &  0.16  & 2.78  & 88  &.02& .05& .22 & 2.79 & 89 &.03  & .54 &7  &16 &2.73  & 86 &.03& .09  & .45\tabularnewline
			$64^{3}$  &$0.08$    &2.86  &260 & .15&.27  &5.54& 2.88& 270& .28  & 25.05  & 10 & 16& 2.89 & 266& .22&1.04& 16.73 \tabularnewline
			$128^{3}$  & 0.03   & 3.13& 309  &.41&3.77 & 24.98 & 3.10 & F  & 4.26  & -- &13&16& 3.12  & F &.52& 3.56  & -- \tabularnewline
			\hline
		\end{tabular}
	\end{center}
\end{table}

\subsection{Test 3}
Next we test PSLR for some general sparse linear systems
including symmetric and nonsymmetric ones to show that the method can
work quite well for general systems. The test
matrices are from SuiteSparse Matrix Collection \cite{Davis} and   Table
\ref{tab:matrices} provides a brief description.
\begin{table}[h]
	\small
	\caption{Some details on the test matrices.}
	\begin{center}
		\tabcolsep1mm
		\def\arraystretch{1.35}
		\begin{tabular}{l|r|r|c|r}
			\hline
			\noalign{\smallskip}
			\multicolumn{1}{c|}{Matrix} & \multicolumn{1}{c|}{Order} & \multicolumn{1}{c|}{nnz}
			&symmetric & \multicolumn{1}{c}{Description}\tabularnewline
			\noalign{\smallskip}
			\hline
			\noalign{\smallskip}
			cfd1    & 70,656   & 1,825,580&yes  &CFD problem \tabularnewline
			ecology1 & 1,000,000 & 4,996,000 & yes &landscape ecology problem
			\tabularnewline
			ecology2 & 999,999 & 4,995,991 &  yes&landscape ecology problem
			\tabularnewline
			thermal1  & 82,654 & 574,458 & yes &thermal problem \tabularnewline
			thermal2  & 1,228,045 & 8,580,313 & yes &thermal problem \tabularnewline
			Dubcova3    & 146,689   & 3,636,643 &yes  &2D/3D problem \tabularnewline
			\hline
			CoupCons3D    & 416,800   &17,277,420 & no &structural problem \tabularnewline
			Atmosmodd  & 1,270,432  & 8,814,880   & no & atmospheric model \tabularnewline
			Atmosmodl    & 1,489,752  & 10,319,760 & no &atmospheric model\tabularnewline
			Cage14 & 1,505,785 & 27,130,349 & no &directed weighted graph
			\tabularnewline
			Transport& 1,602,111 & 23,500,731 & no&structural problem \tabularnewline
			\hline
		\end{tabular}
	\end{center}
	\label{tab:matrices}
\end{table}

Numerical results are presented in Table \ref{tab:general}. {Here, we fixed $s=35$, $m=3$ and $r_k=50$ in the PSLR preconditioner for all the experiments.}
From this table, we can see that the GMRES-PSLR method converges
for all the test problems without tuning its parameters. Moreover, the iteration time
is much less than that of MSLR, ICT, GMSLR and ILUT preconditioners.
The GMRES accelerator failed to converge within 500 iterations
when used in conjunction with the
ICT and MSLR preconditioners for the CFD problem \texttt{cfd1}.

\begin{table}[h]
	\caption{Comparisons of {\rm PSLR} with $m=3$, $r_k=15$ and $m=35$, {\rm ICT/ILUT}
and {\rm MSLR/GMSLR} preconditioners.
\label{tab:general}}
	\def\arraystretch{1.45}
	\small
	\begin{center}
		\centering\tabcolsep1.93pt
		\begin{tabular}{l|lcccc|lccc|cllcccc}
			\hline \noalign{\vspace{0.5pt}}
			\multicolumn{1}{c|}{Matrix}   & \multicolumn{5}{c|}{PSLR} &
			\multicolumn{4}{c|}{ICT}&
			\multicolumn{7}{c}{MSLR}\tabularnewline
			&   fill  & its &o-t & p-t  & i-t   & fill  & its  & p-t  &
			i-t &lev& $r_k$ &fill  & its  &o-t &p-t  &
			i-t\tabularnewline
			\hline
			\scriptsize{cfd1}  & 3.15 & 245&.12 &.51& 3.92 &  3.14 & F& 11.48 & -- & 7 &180 &3.15&F &.20& 10.9& --
			\tabularnewline
			\scriptsize{ecology1}   & 2.68 & 119 &.25& 1.24& 8.33 &  2.67 & 87 & .60 & 17.40 & 7 &32 & 2.68 & 318 &.36& 11.6& 9.97
			\tabularnewline
			\scriptsize{ecology2}   & 2.68 & 107 &.25 &1.25 & 9.57 &  2.67 & 402& .61 & 26.87 & 8 &35 & 2.67 &399 & .35&10.4& 15.3
			\tabularnewline
			\scriptsize{thermal1}   & 2.38& 103 &.15 &.15 & .38 & 2.38& 138 & .16 &  2.84&6 &24 &2.39 & 181 &.20& 1.09& .68
			\tabularnewline
			\scriptsize{thermal2}   & 2.44 & 156&.30& 2.06 & 12.17& 2.45& 317& 2.56 & 26.66  &8 &32 &2.46 &497 &.38& 20.9& 22.8
			\tabularnewline
			\scriptsize{Dubcova3}  & 3.62 & 61&.17 &.87 & 1.67 &  3.59 & 52 & 1.98 & 2.50& 8 &64 & 3.60 &23& .24& 1.58& 2.21
			\tabularnewline
			\hline \noalign{\vspace{0.5pt}}
			\multicolumn{1}{c|}{Matrix}   & \multicolumn{5}{c|}{PSLR} &
			\multicolumn{4}{c|}{ILUT}&
			\multicolumn{7}{c}{GMSLR}\tabularnewline
			&   fill  & its &o-t & p-t  & i-t   & fill  & its & p-t  &
			i-t &lev& $r_k$ &fill  & its &o-t  & p-t  &
			i-t\tabularnewline
			\scriptsize{CoupCons3D}  & 1.54 & 19& .20&1.76& 2.40  & 1.53 & 12 &  8.64 &4.21 &10 &16 & 1.53& 17&.29&2.35 & 3.51
			\tabularnewline
			\scriptsize{Atmosmodd}  & 4.24 & 36&.35&2.40& 6.88  & 4.28 & 45&12.73 & 17.11 &10 &16 & 4.26 & 38&.47&4.0 & 15.78
			\tabularnewline
			\scriptsize{Atmosmodl}   & 4.65& 18&.40 & 4.04 & 10.46&4.66 & 27 &  8.87 & 19.09&11 &16 & 4.62 & 25 &.51& 5.33& 16.22
			\tabularnewline
            \scriptsize{cage14}  & 2.13 & 4& .42&4.13 & 5.79  & 2.11 & 6& 6.95 & 10.18 &6&4 & 2.13 & 38&.51&5.73 & 8.89
			\tabularnewline
			\scriptsize{Transport} & 2.67& 99& .48&5.27 & 19.72 & 2.67& 100& 24.38  & 40.94 &11 &16 & 2.66 & 53 &.60 &6.09& 31.94
			\tabularnewline
			\hline
		\end{tabular}
	\end{center}	
\end{table}
\section{Conclusion}\label{sec:con}
We have presented an effective Schur complement-based
parallel preconditioner for solving general large sparse
linear systems. The method utilizes a standard Schur complement viewpoint
and exploits a power series expansion along with a low-rank correction
technique to approximate the inverse of the Schur complement.
The main difference between PSLR and  other Schur complement
techniques proposed earlier is that PSLR  relies on the
power series expansion to reduce the rank needed to obtain a good
approximation of the inverse of the Schur complement.  The number $m$
of terms used in the power series expansion and the rank used in the
low-rank correction part control the approximation accuracy of the
preconditioner.  In practice, small values for these two parameters
are  sufficient to yield  a reasonably good approximation to
$S^{-1}$.

As was illustrated in the experiments, a big advantage of PSLR is its
high level of parallelism.  Another advantage is its robustness when
solving indefinite linear systems.  Finally, PSLR is fairly easy to
build and apply and is quite general.  All that is required at the
outset is a problem that is partitioned into subdomains.  In our
future work, we will develop a general-purpose distributed memory
version of our current code.

\bibliographystyle{siam}
\bibliography{local2}
\end{document}